\RequirePackage{fix-cm}

\documentclass[smallextended,nospthms]{svjour3}       
\smartqed  
\usepackage{amsmath,amsthm,amssymb,amscd,latexsym,hyperref,graphicx, pdfsync,xcolor}

%
%
%
\setcounter{tocdepth}{2}

\newtheorem{theorem}{Theorem}[section]
\newtheorem{lemma}[theorem]{Lemma}
\newtheorem{corollary}[theorem]{Corollary}
\newtheorem{prop}[theorem]{Proposition}
\newtheorem{defi}{Definition}

\newtheorem{nota}[theorem]{Notations}
\newtheorem{exa}{Example}

\def \build#1#2#3{\mathrel{\mathop{\kern 0pt#1}\limits_{#2}^{#3}}}

\newcommand{\Pn}{\mathbb P_n}
\newcommand{\B}{\mathbb B}
\newcommand{\Bn}{\mathbb B_n}
\newcommand{\Pnt}{\mathbb L_n}
\newcommand{\Pnb}{\mathbb G_{n}}
\newcommand{\pnh}{\widehat{\mathbb G}_n}
\renewcommand{\P}{\mathbb P}

\newcommand{\R}{\mathbb R}

\newcommand{\N}{\mathbb N}
\newcommand{\ac}[1]{\left\{#1\right\}}
\newcommand{\norm}[1]{\left\Vert#1\right\Vert}
\newcommand{\pa}[1]{\left(#1\right)}

\newcommand{\ceil}[1]{\left\lceil#1\right\rceil}
\newcommand{\abs}[1]{\left|#1\right|}

\newcommand{\pr}[1]{\mathbb{P}\left(#1\right)}
\newcommand{\esp}[2]{\mathbb{E}_{#1}\left[#2\right]}
\newcommand{\bigO}[1]{\mathcal O\pa{#1}}

\newcommand{\fonctelle}[1]{\langle#1\rangle}

\newdimen\AAdi%
\newbox\AAbo%
\def\AAk#1#2{\setbox\AAbo=\hbox{#2}\AAdi=\wd\AAbo\kern#1\AAdi{}}%
\newcommand{\ind}{ 1\hspace{-.55ex}\mbox{l}}


\begin{document}

\title{Asymptotic behavior of some factorizations of random words
}

\titlerunning{Statistics of some factorizations of random words}        

\author{Elahe Zohoorian Azad         \and
        Philippe Chassaing 
}


\institute{E. Zohoorian Azad  \at
              School of mathematics and computer sciences\\
               Damghan University\\
               P.O.Box 36715-364\\
Damghan, Iran 
           \and
           P. Chassaing \at
             Institut Elie Cartan de Lorraine\\
Universit{\'e} de Lorraine\\
Campus Scientifique, BP 239 \\
54506 Vandoeuvre-l{\`e}s-Nancy  Cedex France 
}


\maketitle

\begin{abstract}
In this paper we consider the normalized  lengths of the factors of some factorizations of random words. First, for the \emph{Lyndon factorization} of finite random words with $n$ independent letters drawn from a finite or infinite totally ordered alphabet according to a general probability distribution, we prove that the limit law of the  normalized lengths of the smallest Lyndon factors is a variant of the stickbreaking process. Convergence of the distribution of the lengths of the longest factors to a Poisson-Dirichlet distribution follows. Secondly we consider the \emph{standard factorization} of random  \emph{Lyndon word} : we prove that the distribution of the normalized length of the standard right factor of a random  $n$-letters long Lyndon word, derived from such an alphabet, converges, when $n$ is large, to:
$$\mu(dx)=p_1 \delta_{1}(dx) + (1-p_1) \mathbf{1}_{[0,1)}(x)dx,$$
in which $p_1$ denotes the probability of the smallest letter of the alphabet. 
\keywords{Random word
\and Lyndon
word \and Standard right factor \and Longest run \and Poisson-Dirichlet distribution}
\end{abstract}
\tableofcontents

\section{Introduction}
\label{intro}
In this paper we address the statistical properties of two well studied factorizations related to Lyndon words: the \emph{Lyndon factorization} of a word, and the \emph{standard factorization} of a Lyndon word. Applications of these two factorizations are discussed in \cite[Section 4.2]{MR2300777}. Let us recall some notations and definitions from \cite{MR1475463,Reutenauer} for readability. For an ordered alphabet $\mathcal{A}=\{\mathtt{a}_1\prec \mathtt{a}_2\prec\dots\}$, finite or infinite, $\mathcal{A}^{n}$ is the set of  $n$-letters words, and the \emph{language}, i.e. the set of finite words, is
\[\mathcal{A}^{\star}=\ac{\emptyset}\cup\mathcal{A}\cup\mathcal{A}^{2}\cup\mathcal{A}^{3}\cup\dots,\] 
while $\mathcal{A}^{+}$ denotes $\mathcal{A}^{\star}\backslash \ac{\emptyset}$. The length of a word $\mathtt{w}\in \mathcal{A}^{\star}$ is denoted by $|\mathtt{w}|$. The language $\mathcal{A}^{\star}$ is endowed with an operation, the \emph{concatenation} $\mathtt{uv}=\mathtt{w}$ of two words $\mathtt{u}$ and $\mathtt{v}$, that is also  a \emph{factorization} of $\mathtt{w}$. A word $\mathtt{v}$ is a \emph{factor} of a
word $\mathtt{w}$ if there exists  two other words $\mathtt{p}$ and $\mathtt{s}$, possibly empty, such that $\mathtt{w}=\mathtt{pvs}$. If $\mathtt{p}$ (resp. $\mathtt{s}$) is empty, $\mathtt{v}$ is a prefix (resp. a suffix) of $\mathtt{w}$.   If $|\mathtt{p}|=k-1$ and $|\mathtt{pv}|=\ell$,  $\mathtt{w}_{[k,\ell]}$ denotes the factor  $\mathtt{v}$ of the word   $\mathtt{w}$.  

The total order, $\prec$, on the alphabet $\mathcal{A}$, induces a corresponding \emph{lexicographic order}, again denoted by $\prec$, on  $\mathcal{A}^{+}$: the word $\mathtt{v}$ is smaller than the word $\mathtt{w}$ (for the lexicographic order, $\mathtt{v}\prec \mathtt{w}$) at one of the following conditions: either $\mathtt{v}$ is a proper prefix of $\mathtt{w}$, or there exist words $\mathtt{p}$, $\mathtt{s}_{1}$, $\mathtt{s}_{2}$ in $\mathcal A^{\star}$  and letters $\mathtt{a}\prec \mathtt{b}$ in $\mathcal A$, such that $\mathtt{v}=\mathtt{pas}_{1}$ and $\mathtt{w}=\mathtt{pbs}_{2}$. For any factorization  $\mathtt{w}=\mathtt{uv}$ of $\mathtt{w}$, $\mathtt{vu}$ is called  a rotation of $\mathtt{w}$, and the set $\fonctelle{\mathtt{w}}$ of rotations of $\mathtt{w}$ is called  the \emph{necklace} of $\mathtt{w}$. A word $\mathtt{w}$ is primitive if $|\mathtt{w}|=\#\fonctelle{\mathtt{w}}$. In this case the necklace is said to be \textit{aperiodic}.  A word $\mathtt{v}$ is a factor of a
necklace $\langle \mathtt{w}\rangle$ if $\mathtt{v}$ is a factor of some word
$\mathtt{w'}\in\langle \mathtt{w}\rangle$.

The notion of \emph{Lyndon word} has many equivalent  definitions, to be found, for instance, in \cite{MR1475463}. 
\begin{defi}[Lyndon word]
A word $\mathtt{w}\in\mathcal A^{+}$ is a Lyndon word if one of the 2 following (equivalent) conditions is satisfied :
\begin{enumerate}
\item $\mathtt{w}$ is primitive and is the smallest element of $\fonctelle{\mathtt{w}}$ ; 
\item $\mathtt{w}$ is smaller than any proper suffix of $\mathtt{w}$. 
\end{enumerate}
\end{defi}
\begin{exa}
The word $\mathtt{w}=\mathtt{aabaab}$ is the smallest in its necklace 
\[\fonctelle{\mathtt{w}}=\ac{\mathtt{aabaab},\mathtt{abaaba},\mathtt{baabaa}}\]
but is not Lyndon; $\mathtt{baac}$ is not Lyndon, nor $\mathtt{acba}$ or $\mathtt{cbaa}$, but $\mathtt{aacb}$ is Lyndon. Here is an ordered list of  Lyndon words $\mathtt{w}\in\{\mathtt{a},\mathtt{b}\}\cup\{\mathtt{a},\mathtt{b}\}^2\cup\{\mathtt{a},\mathtt{b}\}^3$:$$\mathtt{a}\prec \mathtt{aab}\prec \mathtt{ab}\prec \mathtt{abb}\prec \mathtt{b}.$$
\end{exa}
Let $\mathfrak{L}$ denote the set of Lyndon words on the alphabet $\mathcal{A}$. Note that $\mathfrak{L}\subset\mathcal{A}^{+}$. A recursive characterization of Lyndon words is as follows:
\begin{defi}[and Proposition]
One-letter words are Lyndon. A word $\mathtt{w}$ with length $n\ge  2$ is a Lyndon word if and only if there exists two Lyndon words $u$ and $v$ such that $\mathtt{w}=\mathtt{uv}$ and $\mathtt{u}\prec \mathtt{v}$. Among such factorizations of a Lyndon word $\mathtt{w}$, the  factorization with the smallest \footnote{''Smallest'' does not mean ''shortest'', actually it means ''longest'' here.} suffix $\mathtt{v}$ is called the \emph{standard factorization}.
\end{defi}
\begin{exa}
$\mathtt{0011}=\mathtt{(001)(1)}=\mathtt{(0)(011)}$ is a Lyndon word with two such factorizations. The latter is the standard factorization. Others examples of standard factorizations: $ \mathtt{aaabaab}=\mathtt{aaab.aab},\  \mathtt{aaababb}=\mathtt{a.aababb},\  \mathtt{aabaabb}=\mathtt{aab.aabb}$. 
\end{exa}

The \textit{standard right factor} $v$ of a several letters-Lyndon word $\mathtt{w}$ is its smallest proper Lyndon suffix, but also its smallest proper suffix. The standard factorization of a Lyndon word is the first step in the
construction of some basis of the free Lie algebra over $\mathcal{A}$, due
to Lyndon~\cite{Lyndon} (see for instance \cite{MR1475463} or
\cite{Reutenauer}). \\

While the standard factorization has always 2 factors and applies to Lyndon words, the Lyndon factorization, useful for instance in data compression, see \cite{DBLP:journals/corr/abs-1201-3077}, has a variable number of factors, and applies to any word in $\mathcal{A}^+$. It is defined as follows : 
\begin{theorem} {\bf and Definition }[Chen, Fox and Lyndon, cf. \cite{MR1475463} Theorem 5.1.5]
\label{lyndon}  Any word $\mathtt{w}\in {\mathcal{A}}^{+}$ has a unique factorization  as a nonincreasing product of Lyndon words, called the \emph{Lyndon factorization} of  $\mathtt{w}$ :
$$ \mathtt{w}=\mathtt{w}_\ell \mathtt{w}_{\ell-1}\dots \mathtt{w}_2 \mathtt{w}_1,\hspace{0.5cm}\mathtt{w}_i\in \mathfrak{L},\hspace{0.5cm}\mathtt{w}_\ell\succeq \mathtt{w}_{\ell-1}\succeq\dots\succeq \mathtt{w}_2\succeq \mathtt{w}_1.$$
\end{theorem}

Here, as opposed to the standard factorization, a Lyndon word  factors trivially (it has only one factor).
\begin{figure}[ht] 
\label{LyndonExample}
\begin{center}
{\includegraphics[width=7cm]{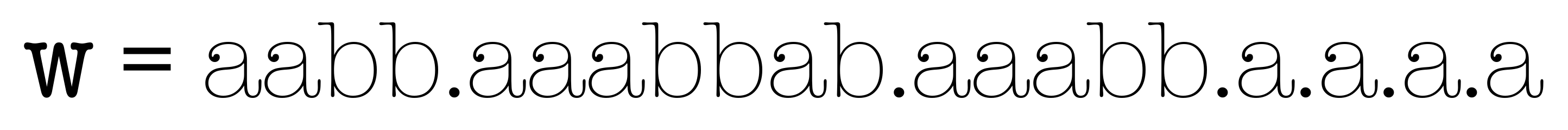}}
\caption{A word $\mathtt{w}$ with 7 factors and a sequence of lengths :} \  \  \  \  \  \  \  
\  \  \ $\rho^{(20)}(\mathtt{w})=\tfrac1{20}\,(1, 1, 1, 1, 5, 7, 4, 0, 0\dots)$
\end{center}
\end{figure}
%
\subsection{Random words and random Lyndon words}

In this paper, we  study the asymptotic behavior of some natural statistics  related to :
\begin{itemize}
\item the Lyndon factorization of a $n$-letters long random word chosen in $\mathcal A^n$ according to the probability distribution $\mathbb P_n$ ;
\item the standard factorization of a $n$-letters long random  Lyndon word  chosen in $\mathfrak L_n$ according to the probability distribution $\mathbb L_n$.
\end{itemize}

Both $\mathbb P_n$ and $\mathbb L_n$ depend on a general probability distribution $(p_{i})_{i\geq 1}$  on the finite or infinite alphabet $\mathcal A=\ac{\mathtt{a}_{1}\prec \mathtt{a}_{2}\prec\dots}$, and we assume, without loss of generality, that $0<p_{1}<1$. On the corresponding language $\mathcal A^\star$, we define the weight $p(\mathtt{w})$ of a word $\mathtt{w}=\mathtt{a}_{\ell_{1}}\mathtt{a}_{\ell_{2}}\dots \mathtt{a}_{\ell_{n}}$ as
\[
p(\mathtt{w}) = p_{\ell_{1}}p_{\ell_{2}}\dots p_{\ell_{n}}.
\]
The weight $p(.)$ defines a probability measure $\mathbb P_{n}$ on the set $\mathcal A^n$,  through
\[\mathbb P_{n}(\{\mathtt{w}\})=p(\mathtt{w}).\]
$\mathcal P_n$ (resp. $\mathcal N_n$, $\mathfrak L_n$) denotes  the set of $n$-letters long  primitive words (resp.  its complement, resp. the set of $n$-letters long Lyndon words). Then we define a probability measure $\Pnt$ on $\mathfrak L_n$, as follows
\[
\Pnt(\{\mathtt{w}\}) = \lambda_{n}p(\mathtt{w}),
\]
 in which $\lambda_{n}=1/\mathbb P_{n}(\mathfrak L_n)=n/\mathbb P_{n}(\mathcal
 P_n)$. The probability measure  $\Pnt$ has a trivial extension to $\mathcal{A}^n$ (setting $\Pnt\pa{\mathfrak L_{n}^c}=0$).

\subsection{Main results}
\label{mainresults}

The sequence $\rho^{(n)}(\mathtt{w})= (\rho_{i,n}(\mathtt{w}))_{i\ge 1}$ of normalized lengths of the Lyndon factors of a word $\mathtt{w} \in \mathcal A^n$, with Lyndon factorization $\mathtt{w}=\mathtt{w}_{\ell} \mathtt{w}_{\ell-1}\dots \mathtt{w}_1$, is defined as follows: 
\[\rho_{i,n}(\mathtt{w})=\begin{cases}
\begin{array}{ll}
\frac {|\mathtt{w}_i|}{n}&\text{if }\ 1\le i\le \ell
\\
0&\text{if }\ i> \ell.
\end{array}
\end{cases}
\]
Thus $\rho_{i,n}(\mathtt{w})$ denotes the normalized length of the $i$-th smallest Lyndon factor of $\mathtt{w}$ (remark that in this paper, when applied to words, e.g. to factors of words and necklaces, the adjectives ``small'' and ``large'' refer to the \textit{lexicographic order} on words, while ``short'' and ``long'' refer to the \textit{size}, or number of letters). Our first result describes  the limit distribution, as $n$ grows, of the sequence $\rho^{(n)}(\mathtt{w})=(\rho_{i,n}(\mathtt{w}))_{i\ge 1}$, seen as a random variable on $\pa{\mathcal A^n,\mathbb P_n}$. We have:

\begin{theorem}
\label{lyndonlengths} For a totally ordered alphabet with
probability distribution $p$ on its letters, $\rho^{(n)}$ converges in law, when $n\rightarrow\infty$, to the random sequence  $\rho= (\rho_{i})_{i\ge 1}$ whose law is defined by the law of $\rho_1$:
$${\mu}(dx)={p_1}\delta_{0}(dx)+ {(1-p_1)} {\mathbf{1}}_{(0,1]}(x)dx,$$
and by the conditional distribution ${\mu}^y$ of $\rho_i$ given $(\rho_1,\rho_2,\dots,\rho_{i-1})$, that only depends on $y=\rho_1+\rho_2+\dots+\rho_{i-1}$, and is as follows :
$${\mu}^y (dx) = \begin{cases}
\begin{array}{lll}
{p_1} \delta_{0}(dx)+
{(1-p_1)} {\mathbf{1}}_{(0,1]}(x)dx&\quad\text{if~}y=0 \\
&\\
 \frac{1}{1-y}{\mathbf{1}}_{(0,1-y]}(x)dx&\quad\text{if~}y>0.
\end{array}
\end{cases}
$$
\end{theorem}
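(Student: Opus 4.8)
The plan is to reduce the statement to a statement about the shortest Lyndon factor, and then to iterate. Recall that if $w = w_\ell w_{\ell-1}\cdots w_1$ is the Lyndon factorization, then $w_1$ is the lexicographically smallest suffix of $w$ that is itself a Lyndon word; equivalently, $w_1$ is the longest Lyndon word that is a suffix of $w$. The first step is therefore to understand the law of $|w_1|/n$ under $\mathbb P_n$. Writing $w = w_1\cdots w_n$, the suffix $w_j w_{j+1}\cdots w_n$ is a Lyndon word precisely when it is strictly smaller than all of its proper suffixes; and the Lyndon factor $w_1$ starts at the position $j^\star$ which is the \emph{last} position realizing the lexicographic minimum among all suffixes (the so-called smallest suffix). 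I would first argue that, with probability tending to one, the letter $a_1$ occurs somewhere in $w$; conditionally on the positions of the occurrences of $a_1$, the smallest suffix begins at one of these occurrences. The event $\{\rho_{1,n} = 0\}$ in the limit corresponds to $|w_1| = o(n)$, which happens (asymptotically) exactly when the last letter $w_n$ equals $a_1$ — an event of probability $p_1$ — because then $w_1 = a_1$ or a short run, whereas if $w_n \ne a_1$ the smallest suffix starts near the last occurrence of the smallest letter actually appearing, and its (normalized) length is governed by where that occurrence sits.

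The key computation is thus: conditionally on $w_n \ne a_1$ (probability $1-p_1$), the starting position $j^\star$ of $w_1$, rescaled by $n$, converges to a uniform random variable on $[0,1)$, so that $\rho_{1,n} = (n-j^\star+1)/n$ converges to a uniform variable on $(0,1]$. To see the uniformity, note that among the suffixes, the competition for ``smallest'' is essentially decided by the first letter: the smallest suffix must begin with the smallest letter $a_{i_0}$ that occurs in $w$, and among those candidates, by a further letter-by-letter comparison. One shows that, after conditioning on $w_n\ne a_1$, the index $i_0$ of the smallest occurring letter and the relative position of the winning occurrence behave so that the fraction of $w$ lying strictly before that position is asymptotically uniform on $[0,1]$; this is where a second-moment or direct combinatorial estimate on the number of suffixes tying for the minimum is needed, together with the elementary fact that the positions of i.i.d.\ letters equal to a fixed value form, after rescaling, a Poisson / uniform point process. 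This yields exactly $\mu(dx) = p_1\delta_0(dx) + (1-p_1)\mathbf 1_{(0,1]}(x)\,dx$ as the limit law of $\rho_{1,n}$.

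Next I would set up the induction on $i$. Having removed the factor $w_1$, the remaining prefix $w' = w_\ell\cdots w_2$ is, conditionally on its length $m = n - |w_1|$, distributed as $\mathbb P_m$ \emph{conditioned on the event that its own smallest suffix — hence its own last Lyndon factor — is $\ge w_1$}. Here the dichotomy in $\mu^y$ appears. If $y = \rho_1 + \cdots + \rho_{i-1} = 0$, i.e. the previously extracted factors were all of negligible length, then in the limit nothing has really been removed: $w'$ still looks like a fresh random word of length $\sim n$, the constraint $w_{i}\ge w_{i-1}$ is asymptotically vacuous (the previous factors were short runs of $a_1$, which is the minimum, so any Lyndon word is $\ge$ them), and we get the same law $p_1\delta_0 + (1-p_1)\mathbf 1_{(0,1]}$ again. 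If instead $y > 0$, then $w'$ has length $\sim (1-y)n$ and — crucially — the constraint forces all subsequent factors to have length at least that of a genuinely long Lyndon word, so with probability $1$ in the limit the next factor is again long (no atom at $0$), and rescaling the previous uniform computation from length $(1-y)n$ gives the density $\frac1{1-y}\mathbf 1_{(0,1-y]}(x)\,dx$. Assembling these conditional limits, and checking that the joint convergence follows from the convergence of the finite-dimensional conditionals (a standard argument, since each $\rho_{i,n}$ lives in the compact $[0,1]$ and the conditional laws converge continuously in the conditioning variable $y$), gives the theorem.

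The main obstacle I anticipate is the uniformity claim in the second paragraph: controlling, under $\mathbb P_n$ conditioned on $w_n\ne a_1$, the law of the starting point of the smallest suffix. One must rule out that ties for the lexicographic minimum among suffixes bias this position away from uniform, and handle the dependence introduced by conditioning on the whole past factorization $(w_1,\dots,w_{i-1})$; a clean way is to express $\rho_{1,n}$ via the occurrences of the minimal letter and the comparison of the associated ``tails,'' reducing to renewal/point-process asymptotics for i.i.d.\ letters, but making that rigorous — especially the transfer from the biased measure $\mathbb P_n(\,\cdot \mid w_{i}\ge w_{i-1})$ back to an unconditioned word of slightly shorter length — is the delicate technical core.
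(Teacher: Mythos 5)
Your outline has the right limit structure and one genuinely correct structural observation (conditionally on the last Lyndon factor $w_1=v$, the remaining prefix is distributed as a shorter random word conditioned on all of its suffixes being $\ge v$), but the two steps that carry all the difficulty are asserted rather than proved, and the tools you point to would not suffice as stated. First, the claim that, given $w_n\neq a_1$, the starting position of the smallest suffix is asymptotically uniform is the crux of the whole theorem. The winning suffix does not begin at a ``typical'' occurrence of $a_1$: it begins at the longest run of $a_1$ (with ties resolved by comparing the next $\Theta(\log n)$ letters), so the Poisson/uniform point process of occurrences of the minimal letter does not by itself control the position of the winner, and ruling out bias from ties is precisely what needs a real argument. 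The paper manufactures this uniformity by decomposing the word into long and short blocks, observing that $\P_n$ is exchangeable under permutations of the blocks, and converting that exchangeability into a quantitative Wasserstein bound (Lemma \ref{larcin2}, Theorem \ref{distance3}); you offer no substitute mechanism.

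Second, your inductive step is not a rescaling of the unconditional computation. Given $y>0$, the prefix of length $m\sim(1-y)n$ is conditioned on the event that its smallest Lyndon factor is $\ge w_1$, where $w_1$ starts with an $a_1$-run of length about $\log_{1/p_1}n$, comparable to the prefix's own longest run; this conditioning event has probability bounded away from $0$ and $1$ and depends on the prefix's extreme runs, so one must show it is asymptotically a function of the block \emph{contents} only and hence asymptotically independent of block \emph{positions} (the analogue of the paper's estimate \eqref{joint4}), uniformly in the random conditioning word $v$ and compounded over the steps of the induction. Nothing in your sketch does this, and your stated reason for the absence of an atom at $0$ (``the constraint forces subsequent factors to be long'') is not a valid implication, since $w_2\ge w_1$ is a lexicographic, not a length, constraint; the correct reason is that the conditioning forbids a trailing $a_1$ (because $a_1<w_1$), together with a separate estimate that factors of length $o(n)$ but $\ge 2$ occur with vanishing probability. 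The paper sidesteps the iterated conditioning altogether by treating all factors at once: the positions of the $k$ lexicographically smallest long blocks are jointly asymptotically uniform, the Lyndon factors correspond to the low records of this sequence, the degenerate initial factors come from the geometric number of trailing $a_1$'s, and the limit is obtained as a continuous functional of $(L,U)$. As it stands, your proposal names the delicate points but leaves them open, so it does not constitute a proof.
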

In other words, if we set
$s_{i}=1-(\rho_{1}+\rho_{2}+\dots+\rho_{i})$, then
$s=(s_{i})_{i\ge1}$ is a Markov chain starting from $1$ at time
$0$, with transition probability
$$p(y,dx) = \begin{cases}
\begin{array}{ll}
{p_1} \delta_{1}(dx)+
{(1-p_1)} {\mathbf{1}}_{(0,1]}(x)dx\ ; \; \; \; y=1 \\
&\\
 \frac{1}{y}{\mathbf{1}}_{(0,y]}(x)dx\ ; \; \; \; y<1.
\end{array}
\end{cases}
$$
The process $s$ is a variant of the \emph{stickbreaking process} \cite{MR2615013,MR1156448} related to the Poisson-Dirichlet (0,1) distribution, in which the first attempts to break the stick would fail (with probability $p_1$) and would produce a geometric number of fragments with size 0 at the beginning of the process. For the stickbreaking process the transition probability $\tilde{p}(y,dx)$ is $\frac{1}{y}{\mathbf{1}}_{(0,y]}(x)dx$ for any $y\in[0,1]$ : the stickbreaking process can be seen as the sequence of low records of an i.i.d. sequence $U$ of uniform random variables on $[0,1]$. Of course, whence $\rho^{(n)}$ and $\rho$ are rearranged in decreasing order,  the small initial fragments are rejected at the end or, in the case of $\rho$, they disappear. Thus
\begin{corollary}
\label{pd1} The decreasing rearrangement of $\rho^{(n)}$ converges in law to the Poisson-Dirichlet (0,1) distribution.
\end{corollary}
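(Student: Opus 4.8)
The plan is to read the corollary off Theorem~\ref{lyndonlengths} and its accompanying description of the limit $\rho$, in three steps: identify the law of the decreasing rearrangement $\rho^\downarrow$ of $\rho$, upgrade the mode of convergence so that the rearrangement map becomes continuous, and then apply the continuous mapping theorem.

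\emph{Step 1: $\rho^\downarrow$ is Poisson--Dirichlet$(1)$.} This is essentially the content of the remark following Theorem~\ref{lyndonlengths}, made precise. Let $G$ be the number of leading zero coordinates of $\rho$. By the form of $\mu$ and of $\mu^y$ at $y=0$, $G$ is geometric with parameter $1-p_1$, hence finite almost surely; and conditionally on $\{G=g\}$ the chain $s=(s_i)$ evolves, for $i>g$, exactly under the stickbreaking kernel $\tilde p(y,dx)=\tfrac1y\mathbf 1_{(0,y]}(x)\,dx$, so that $(\rho_{g+1},\rho_{g+2},\dots)$ is a stickbreaking (residual allocation, $\mathrm{GEM}(1)$) sequence. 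In particular $s_i\to 0$ and $\sum_{i>g}\rho_i=1$ a.s., whence $\sum_i\rho_i=1$ a.s.; and since the coordinates of a $\mathrm{GEM}(1)$ sequence are all strictly positive a.s., the $g$ leading zeros of $\rho$ do not appear in its decreasing rearrangement. Thus $\rho^\downarrow$ is the decreasing rearrangement of a $\mathrm{GEM}(1)$ sequence, which by the classical relation between the stickbreaking process and the Poisson--Dirichlet$(1)$ distribution \cite{McCloskey,MR1156448} has the Poisson--Dirichlet$(1)$ law.

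\emph{Step 2: transferring the convergence.} Theorem~\ref{lyndonlengths} gives $\rho^{(n)}\to\rho$ in law on $[0,1]^{\mathbb N}$ with the product topology, which on this compact Polish space amounts to convergence of all finite-dimensional marginals. By Skorokhod's representation theorem we may put all the $\rho^{(n)}$ and $\rho$ on one probability space with $\rho^{(n)}\to\rho$ coordinatewise a.s. Since $\sum_i\rho_{i,n}=1$ for every $n$ and $\sum_i\rho_i=1$ a.s.\ by Step~1, Scheffé's lemma upgrades this to $\lVert\rho^{(n)}-\rho\rVert_{\ell^1}\to 0$ a.s. The decreasing rearrangement map is continuous (indeed non-expansive) for the $\ell^1$-metric, so $(\rho^{(n)})^\downarrow\to\rho^\downarrow$ a.s., hence in law; with Step~1 this is the corollary. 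One can also bypass Skorokhod: the bound $\lim_{k\to\infty}\limsup_n\mathbb P_n\bigl(\sum_{i>k}\rho_{i,n}>\varepsilon\bigr)=0$, which follows from Theorem~\ref{lyndonlengths} because $\sum_{i\le k}\rho_{i,n}\to\sum_{i\le k}\rho_i$ in law with the latter increasing a.s.\ to $1$, says that no mass escapes to arbitrarily high indices, and the rearrangement map is continuous on the corresponding subset of $[0,1]^{\mathbb N}$ for the product topology.

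The content of the corollary is thus carried almost entirely by Theorem~\ref{lyndonlengths}; the only point demanding a little care is Step~2, namely upgrading the finite-dimensional (product-topology) convergence of $\rho^{(n)}$ to one under which sorting an infinite nonnegative vector is a continuous operation — equivalently, ruling out the escape of mass to infinitely large indices. Both devices above settle this using nothing beyond Theorem~\ref{lyndonlengths} together with $\sum_i\rho_i=1$ a.s.
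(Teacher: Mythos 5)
Your proposal is correct and follows essentially the same route as the paper, which derives the corollary directly from Theorem~\ref{lyndonlengths} together with the stickbreaking description of the limit (the leading zero fragments disappear under ranking, and the remaining GEM$(1)$ sequence has Poisson--Dirichlet$(1)$ ranked law). The paper states this as immediate; your Step~2 (Skorokhod plus Scheff\'e, or the no-mass-escape bound, to make the rearrangement map continuous) simply supplies the details the paper leaves implicit, and it is sound.
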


As regards the second result, for any Lyndon word $\mathtt{w} \in \mathfrak{L}_n$, let $R_{n}(\mathtt{w})$ denotes the length of its standard right
factor, and set $r_n=R_n/n$. We have:

\begin{theorem}
\label{loilimit2} For a totally ordered alphabet with probability
distribution $p$ on its letters, the sequence of normalized lengths $r_n$ of
standard right factors of a random $n$-letters long Lyndon word converges in law, when $n\rightarrow\infty$, to
$$
\mu(dx) = p_{1} \delta_{1}(dx) + (1-p_{1})
\mathbf{1}_{[0,1)}(x)dx,
$$
where $\delta_1$ denotes the Dirac mass on the point $1$ and $dx$
the Lebesgue measure on $\R$. As a consequence the moments of
$r_n$ converge to the corresponding moments of $\mu$.
\end{theorem}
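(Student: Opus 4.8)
The plan is to reduce the statement to the statistic $\rho_{1,m}$ of Theorem~\ref{lyndonlengths}. Write a Lyndon word $\ell=\ell_1\ell_2\cdots\ell_n$ and set $\ell'=\ell_2\cdots\ell_n$. By definition the standard right factor of $\ell$ is its smallest proper suffix; the proper suffixes of $\ell$ are exactly the non-empty suffixes of $\ell'$; and the smallest suffix of any word is, by a standard fact, its last (smallest) factor $w_1$ in the Lyndon factorization of Theorem~\ref{lyndon}. Hence $R_n(\ell)=|w_1(\ell')|$, that is
\[
 r_n=\frac{n-1}{n}\,\rho_{1,n-1}(\ell'),
\]
so it suffices to find the limiting law of $\rho_{1,n-1}(\ell')$ when $\ell\sim\mathbb L_n$.

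Next I would identify the law of $\ell'$. Summing $\mathbb L_n$ over the first letter, $\mathbb P(\ell'=x)=\lambda_n\,p(x)\sum_{a:\,ax\in\mathcal L_n}p_a$. A Lyndon word containing $a_1$ begins with $a_1$, and the weight of $n$-letter words avoiding $a_1$ is $(1-p_1)^n$; discarding this exponentially small contribution, $\ell'$ is, up to exponentially small total-variation error, distributed as $\mathbb P_{n-1}$ conditioned on the event $E_{n-1}:=\{a_1X\in\mathcal L_n\}$. Since $a_1X$ is Lyndon iff it is smaller than every suffix of $X$, i.e.\ smaller than the smallest suffix of $X$, this conditioning is of the same ``beat every rotation'' flavour as the one behind Theorem~\ref{lyndonlengths}.

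Two regimes then have to be separated. For the atom, one observes that prepending $a_1$ to a Lyndon word of length $\ge 2$ again yields a Lyndon word (because then $a_1X<X$, while $X$ is smaller than its proper suffixes), so $\mathcal L_{n-1}\subseteq E_{n-1}$; with the estimates $\mathbb P_m(\mathcal L_m)=\tfrac1m\mathbb P_m(\mathcal P_m)\sim\tfrac1m$ and $\mathbb P_{n-1}(E_{n-1})=\tfrac1{p_1}\,\mathbb P_n(\{\ell\in\mathcal L_n:\ell_1=a_1\})\sim\tfrac1{p_1n}$ (errors exponentially small, as non-primitive words and words avoiding $a_1$ are exponentially rare), this gives $\mathbb P(\ell'\in\mathcal L_{n-1}\mid E_{n-1})\to p_1$; on that event $\rho_{1,n-1}(\ell')=1$, which produces the atom $p_1\delta_1$. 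For the continuous part one works on $E_{n-1}\setminus\mathcal L_{n-1}$ (asymptotic mass $1-p_1$): writing $\ell'=P\,w_1(\ell')$ with $P$ non-empty, the word $a_1\ell'=(a_1P)\,w_1(\ell')$ is the standard factorization of the Lyndon word $a_1\ell'$, with $a_1P$ Lyndon, $a_1P<w_1(\ell')$, and $w_1(\ell')$ equal to the longest Lyndon proper suffix of $a_1\ell'$. One then has to show that, under the conditional law, $|P|/(n-1)$ is asymptotically uniform on $[0,1]$, equivalently $\rho_{1,n-1}(\ell')\Rightarrow\mathbf 1_{[0,1)}(x)\,dx$ on this event.

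The hard part is precisely this last step: the conditioning by $E_{n-1}$ must both destroy the atom at $0$ present in the unconditioned law of $\rho_{1,n-1}$ and flatten the continuous part to a uniform. I would try to obtain it by rerunning, under the conditioning, the renewal / stickbreaking description of the Lyndon factorization used for Theorem~\ref{lyndonlengths}, the cyclic symmetry hidden in ``$a_1X$ smaller than all its rotations'' turning, by a cycle-lemma argument, the location of the smallest suffix into a uniform variable; an alternative is to estimate directly, through the bijection above and $\mathbb P_m(\mathcal L_m)\sim1/m$, the total weight of pairs $(u,s)$ of Lyndon words with $u$ starting with $a_1$, $|u|=k+1$, $|s|=n-1-k$, $u<s$ and $\mathrm{SRF}(u)\ge s$, and to show that its sum over $k\ge(1-x)(n-1)$ has order $(1-p_1)x/n$. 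Either way, combining the two regimes yields $\mathbb P(r_n\le x)\to(1-p_1)x$ for $x\in(0,1)$ and $\mathbb P(r_n>1-\varepsilon)\to p_1$ as $\varepsilon\downarrow0$, i.e.\ $r_n\Rightarrow\mu$; since $r_n$ is $[0,1]$-valued, convergence of all moments follows.
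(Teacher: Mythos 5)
Your reduction is sound as far as it goes: the identity $R_n(\ell)=|w_1(\ell')|$ with $\ell'=\ell_2\cdots\ell_n$, the identification of the law of $\ell'$ as $\mathbb P_{n-1}$ conditioned on $E_{n-1}=\{a_1X\in\mathcal L_n\}$ up to exponentially small error, and the computation of the atom (via $\mathcal L_{n-1}\subseteq E_{n-1}$, $\mathbb P_{n-1}(\mathcal L_{n-1})\sim 1/(n-1)$ and $\mathbb P_{n-1}(E_{n-1})\sim 1/(p_1 n)$) are all correct, and this treatment of the mass $p_1\delta_1$ is a clean alternative to the paper's estimate \eqref{gndeln} for $\Pnb(\mathcal L_n^1)$. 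But the proof stops exactly where the theorem lives. On $E_{n-1}\setminus\mathcal L_{n-1}$ you still have to prove that $|w_1(\ell')|/(n-1)$ is asymptotically uniform on $[0,1]$, and this cannot be obtained by appealing to Theorem \ref{lyndonlengths}: the conditioning event $E_{n-1}$ has probability $\Theta(1/n)$, so the unconditional limit law of $\rho_{1,n-1}$ (which moreover has an atom at $0$, not at $1$) carries no information about the conditional one — as your own atom computation shows, the conditioning changes the limit drastically. What you offer for this step is two unimplemented plans ("rerun the stickbreaking description under the conditioning", "a cycle-lemma argument", or a direct weight estimate of pairs $(u,s)$ whose claimed order $(1-p_1)x/n$ is asserted, not derived). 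Neither is carried out, and neither is routine: establishing that the position of the second-smallest rotation (equivalently, the prefix length $|P|$) is asymptotically uniform is precisely the technical heart of the paper, which it achieves through the good-word/long-block decomposition (Definitions \ref{defi:longblock2}, \ref{goodword2}, Propositions \ref{Gn3}, \ref{consGn2}), the invariance of $\Pnb$ under random permutation of blocks (Corollary \ref{condition2}), and the Wasserstein shuffling bound (Lemma \ref{larcin2}, Theorem \ref{distance2}) giving $r_n=1-d_n$ with $d_n$ asymptotically uniform. Without a worked-out substitute for that machinery (in particular, some exchangeability or exact-counting argument valid for a general letter distribution $p$, where no closed-form combinatorics are available), the proposal has a genuine gap and does not prove the continuous part of the limit law.
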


For instance, if $p$ is the uniform distribution on $q$ letters,
then the limit law of the normalized length of the standard right
factor of a random Lyndon word, is
$$
\mu(dx) = \frac1q \delta_{1}(dx) + \frac{q-1}q
\mathbf{1}_{[0,1)}(x)dx.
$$

\subsection{Context}
The Poisson-Dirichlet family of distributions was introduced by Kingman \cite{1975}. This distribution arises as a limit for the size of components of decomposable structures in a variety of settings, as shown by Hansen \cite{MR1293077} or Arratia et al. \cite{MR1702562}. 

When the distribution $p$ is uniform on $q$ letters, i.e.
\[p_k=\frac1q\,1\!\!1_{1\le k\le q},\]
the combinatorics of the Lyndon factorization have connections with that of $q$-shuffles \cite{MR1161056} and of monic polynomials of degree $n$ over the finite field $GF(q)$, as explained in \cite{MR1245159,diaconis}. When  $p$ is  uniform, Corollary \ref{pd1} is well known (cf.  \cite{MR1230136,MR1293077}), through the Golomb correspondance between polynomials and words. Actually, for a uniform $p$, a precise description of the size of Lyndon factors in term of the standard Brownian motion is given in  \cite{MR1264035,MR1230136}. Our contribution is twofold :
\begin{itemize}
\item  Theorem \ref{lyndonlengths} provides a description of the sizes of factors in the Lyndon factorization of random words \textit{depending on their rank} in the factorization. Obviously, the order of factors matters in the Lyndon factorization of words, while it has no meaning in the previously cited papers about polynomials or permutations ;
\item even if we sort the factors' lengths in decreasing order, as in Corollary \ref{pd1}, a proof along the lines of \cite{MR1230136,MR1293077} seems out of reach, for we use a perfectly general distribution $p$ on the alphabet (we only require more than one letter): thus, for a combinatorial proof of our result, a precise description for the distribution of sizes of factors jointly with a count of each letter of the alphabet in each factor would be needed, most likely. For instance, for a general $p$, we were not able to prove, or to disprove, the \textit{conditioning relation}  (cf. \cite[p. 2]{Arratia}) that is usually required for convergence to the Poisson-Dirichlet distribution in such settings.
\end{itemize}
As explained in the next section, we circumvent the combinatorial complexity of the problem with the help of a shuffle trick, Lemma \ref{larcin2}, a multivariate extension of the univariate result that was used in \cite{MR2116634,MR2312436}.  In these two papers (\cite{MR2116634,MR2312436}), an invariance by shuffle is used to analyze the lengths of the 2 factors in the  \emph{standard factorization} of random \emph{Lyndon} words with 2 equiprobable letters (an average case analysis is given in \cite{MR2116634}, and the limit distribution is obtained in \cite{MR2312436}), while our paper uses a multivariate extension of the shuffle trick to obtain the asymptotic Poisson-Dirichlet behaviour in the \emph{Lyndon decomposition} of random words.

Incidentally, because the path has already been cleared by our work on the asymptotic Poisson-Dirichlet behaviour, we provide, in Theorem \ref{loilimit2}, a full generalisation of the result given in  \cite{MR2312436}, that is, the asymptotic distribution of the lengths of the two factors of the standard factorization of random Lyndon words is given here for a general probability distribution on an eventually infinite alphabet.

\section{Sketch of proofs}
\label{sketch}
\subsection{Lyndon factorization and runs}
\label{sketch1}

As seen on Figure \ref{LyndonExample}, the smallest factors in the Lyndon factorization of some word $\mathtt{w}\in \mathcal{A}^{n}$ are usually several monoletters words  $\mathtt{a}_{1}$ at the end of $\mathtt{w}$, but besides that, almost surely, a factor of the Lyndon factorization of a random word  $\mathtt{w}$ begins with \emph{long run} of the letter $\mathtt{a}_1$ :
 the long runs mark the beginnings of the smallest words in the necklace $\langle\mathtt{w}\rangle$, which are also the places where  $\mathtt{w}$  is split into its Lyndon factors, according to the following rule : for instance, in a word containing $9$ long runs with lexicographic ranks going from $1$ to $9$, without ties, the runs could be placed along the word in the following way :
$$
\dots4\dots8\dots3\dots5\dots7\dots1\dots9\dots2\dots6\dots.
$$
In this example,  run 1, both the longest, and the smallest lexicographically, marks the beginning of the first Lyndon factor so that the position of run 1 determines the length of the first Lyndon factor, that englobes runs 9, 2 and 6. The second Lyndon factor begins with run 3 and englobes runs 5 and 7, its length is given by the positions of runs 3 and 1, then Lyndon factors 3 and 4 split at the beginning of run 4, and so on ... Note that this argument breaks down if  some of these runs are tied.
Runs $1$, $3$ and $4$, are \emph{records} of the sequence $\underline{4}8\underline{3}57\underline{1}926$ : with the help of  Lemma \ref{larcin2}, we plan to prove that if $V_{i}$ denote the (normalized) position of the beginning of run $i$ inside the random word $\mathtt{w}$, then the sequence  $V=(V_{i})_{i\ge 1}$ is asymptotically i.i.d. uniform on $[0,1]$, and the Lyndon factors split  $\mathtt{w}$ at positions distributed as the successive minimums (or records) of the sequence $V$. That would be  the stick-breaking construction of the Dirichlet process \cite{MR2615013}. 

However there are catches : to provide the asymptotic behaviour of the complete sequence of Lyndon factors, we need an unlimited supply of long runs, and the lexicographic  ranking of these runs must be unambiguous, without any tie. For the first point, let $H_{n}(\mathtt{w})$ denote the number of runs longer than 
\begin{equation}
\label{longrundef}
r=\left\lceil(1-\varepsilon)\log_{1/{p_1}} n\right\rceil,
\end{equation}
in $\mathtt{w}\in \mathcal{A}^{n}$ (such runs are called \emph{long runs}). According to Lemma \ref{lemRn2},  the sequence $(H_{n})_{n\ge1}$ is an unbounded sequence of random variables if $1>\varepsilon>0$. However, for the second point, the probability that there are several longest runs, tied, and also that there are ties at other positions than the first, is non vanishing. Set 
$$\beta=\max \ac{p_{1},1-p_{1}}, \quad
\tilde{r}=1+\lceil 3\log_{1/\beta} n \rceil.$$
In order to break the ties, each long run of $\mathtt{w}$ has to be appended with a suffix to form a  $\tilde{r}$-letters long factor  of  $\mathtt{w}$, called \emph{long block}. According to Lemma \ref{En2}, but for a vanishing probability, all these factors  of  $\mathtt{w}$ are different, thus they are not tied in the lexicographic order, and they are strictly smaller than the other factors of the same length, since they begin with a long run of $\mathtt{a}_{1}$. Thus their rank in the lexicographic order, with their positions, give the Lyndon factorization of  $\mathtt{w}$, as explained at the beginning of this section.  

Let us give a more formal definition of \emph{long runs} : in
this paper $\varepsilon$ denotes a real number in $(0,1/2)$, and a maximal run of the letter $\mathtt{b}$ in the word $\mathtt{w}$ is a factor $\mathtt{w}_{[k+1,\ell]}$ of $\mathtt{w}$ of the form $\mathtt{b}^{\ell-k}$, such that no factor $\mathtt{w}_{[s,t]}$ with $s\le k+1\le\ell\le t$ is a run of $\mathtt{b}$, unless $s=k+1\le\ell=t$. We usually call $k$ the \emph{position} of the factor $\mathtt{w}_{[k+1,\ell]}$.

\begin{defi}
\emph{(Long runs and short runs)}
\label{longrun1}
We call \emph{long run} (resp.
\textit{short run}) of $\mathtt{w}\in \mathcal A^n$ a maximal run of the letter $\mathtt{a}_1$ with
length at least (resp. smaller than)  $r=\left\lceil(1-\varepsilon)\log_{1/
{p_1}} n\right\rceil$. We denote by $H_{n}(\mathtt{w})$ the number of long
runs of ``$\mathtt{a}_1$'' in $\mathtt{w}$.
\end{defi}

\subsection{A refined factorization}
\label{sketch2}
We introduce 2 refinements of the Lyndon factorization, in smaller factors, the first one according to a simple code. For that, we need the following definition:

\begin{defi}\label{morphism} Set $\mathcal B=\{0,1\}$. Now, let $\varphi$ denote the morphism, from $\mathcal{A}^\star$ to $\mathcal B^\star$, that sends the letter $\mathtt{a}_1$ on the digit $\mathtt{0}$, any other letter of $\mathcal A$ on the digit $\mathtt{1}$, and for any $k$ and any word $\mathtt{w}\in \mathcal A^k$, let $\varphi$ send $\mathtt{w}$  on the word $\varphi(\mathtt{w})=\varphi(\mathtt{w}_{1})\varphi(\mathtt{w}_{2})\dots\varphi(\mathtt{w}_{k})\in  \mathcal B^k$. Let $\varphi_{n}$ denote the restriction of  $\varphi$ to $\mathcal A^n$.
\end{defi}
Let $\mathfrak{S}_{m}$ denotes the set of permutations of $\{1, \dots, m\}$, and let $\tau\in\mathfrak{S}_{m}$. Consider, in $\mathcal{A}^{\star}$, the monoid $\mathcal{M}$ of words that begin, but do not end, with letter $\mathtt{a}_{1}$ ; $\mathcal{M}$ is stable, thus free, and its minimal set of generators 
$$\mathcal{X}=\varphi^{-1}\pa{\ac{\mathtt{0}^{k}\mathtt{1}^{\ell}|\ k,\ell\ge 1}}$$
is  a code, according to \cite[Ch. 1.2]{MR797069}. As a consequence, the factorization in $\mathcal{X}$ of some word   $\mathtt{w}\in\mathcal{M}$  is unique. It follows that the action of $\mathfrak{S}_{m}$ on such a factorization $\mathtt{w}=\mathtt{x}_{1}\mathtt{x}_{2}\dots\mathtt{x}_{m}$  is well defined, if $\tau.\mathtt{w}$ is defined as the word of  $\mathcal{M}$ whose unique factorization is $\mathtt{x}_{\tau(1)}\mathtt{x}_{\tau(2)}\dots\mathtt{x}_{\tau(m)}$. The \emph{orbit} $\mathfrak{O}(\mathtt{w})$ of $\mathtt{w}$ under the action of  $\mathfrak{S}_{m}$ is the set $\ac{\tau.\mathtt{w}\,\vert\, \tau\in\mathfrak{S}_{m}}$.

Thus, if $\tau$ is a \emph{random permutation}, then $\tau.\mathtt{w}$ is uniformly distributed on the orbit $\mathfrak{O}(\mathtt{w})$. But the conditional distribution of $\mathtt{w}$ under $\Pn$, given that $\mathtt{w}$ belongs to some orbit $\mathfrak{O}$ in $\mathcal{M}$, is also the uniform distribution on $\mathfrak{O}$, since  the number of occurences of any letter of $\mathcal{A}$ is the same in $\tau.\mathtt{w}$  and in $\mathtt{w}$, so that $\Pn\pa{\ac{\tau.\mathtt{w}}}=\Pn\pa{\ac{\mathtt{w}}}$.

When $\mathtt{w}\notin\mathcal{M}$, we consider the longest factor $\text{pr}_{\mathcal{M}}\pa{\mathtt{w}}$ of $\mathtt{w}$ that belongs to $\mathcal{M}$ ;  $\text{pr}_{\mathcal{M}}\pa{\mathtt{w}}$ is obtained by erasing eventually a run of non-$\mathtt{a}_{1}$ letters at the beginning of $\mathtt{w}$, and a run of $\mathtt{a}_{1}$    at the end. The lengths of these 2 runs  are $\bigO{1}$ with a  probability close to 1, thus $\text{pr}_{\mathcal{M}}\pa{\mathtt{w}}$ and $\mathtt{w}$ are close in terms of the positions of their long runs, once they are rescaled by a factor $1/\abs{\mathtt{w}}$. A random  permutation of the factors of  $\mathtt{w}$ will then be defined as a permutation of the factors of $\text{pr}_{\mathcal{M}}\pa{\mathtt{w}}$, see the next section.  

However, the probability that the smallest factors of $\mathtt{w}$ in $\mathcal{X}$ are tied in the lexicographic order is  non vanishing when $\abs{\mathtt{w}}$ grows, and that prevents us from reading the Lyndon factorization on each of these factors : we would need to consider sequences of factors to break the ties, and these sequences are not preserved by permutations of factors. The shuffle argument of the next section  would then break. 

To circumvent this problem, consider a new factorization : each factor $\mathtt{x}$ of  $\mathtt{w}$ in $\mathcal{X}$  belongs to some subset $\varphi^{-1}\pa{\mathtt{0}^{k}\mathtt{1}^{\ell}}$ for some $k, \ell\in \N$, so set $\abs{\mathtt{x}}_{\mathtt{0}}=k$ (while $|\mathtt{x}|=k+\ell$) and consider the factorization  $\mathtt{x}_{1}\mathtt{x}_{2}\dots\mathtt{x}_{m}$ of  $\mathtt{w}$ in $\mathcal{X}$. 
In the new factorization, each factor $\mathtt{x}_{i}$ such that $\abs{\mathtt{x}_{i}}_{\mathtt{0}}\ge r$  is merged with its successors $\mathtt{x}_{i+1}$, $\mathtt{x}_{i+2}$, etc ... to form a new factor $\mathtt{y}\in\mathcal{M}$, that we call \emph{long block}, such that $
\abs{\mathtt{y}}\ge \tilde{r}$, but such that $\mathtt{y}$ is minimal in the sense that  $\mathtt{y}$ has no factorization  $\mathtt{uv}$ (in $\mathcal{M}$) with  $\abs{\mathtt{u}}\ge \tilde{r}$. In this way, one obtains a \emph{new factorization} of $\mathtt{w}$, provided that there is enough space between the last long run and the end of the word, or between two long runs, so that the associated long blocks do not overlap. The subset $\tilde{\mathcal{M}}\subset\mathcal{M}$ of words with a such a factorization is again a stable, thus free, monoid, with a new minimal set of generators $\tilde{\mathcal{X}}$, again a code, that includes the long blocks plus the elements $\mathtt{x}\in \mathcal{X}$ such that  $
\abs{\mathtt{x}}_{\mathtt{0}}< r$. In Section \ref{building1}, we  prove that 
$$\lim_{n}\Pn\pa{\text{pr}_{\mathcal{M}}\pa{\mathtt{w}}\in\tilde{\mathcal{M}}}=1,$$
and more.

\subsection{The factorization shuffle}
\label{sketch3}
When $\text{pr}_{\mathcal{M}}\pa{\mathtt{w}}\in\tilde{\mathcal{M}}$, there exists a unique factorization $$\mathtt{x}_{0}\text{pr}_{\mathcal{M}}\pa{\mathtt{w}}\mathtt{x}_{m+1}=\mathtt{x}_{0}\mathtt{y}_{1}\mathtt{y}_{2}\dots\mathtt{y}_{m}\mathtt{x}_{m+1}$$ of $\mathtt{w}$, in which $\mathtt{y}_{i}\in\tilde{\mathcal{X}}$ for $1\le i\le m$, $\mathtt{x}_{0}$ is a run of non-$\mathtt{a}_{1}$ letters at the beginning of  $\mathtt{w}$, and $\mathtt{x}_{m+1}$ is the final run of the letter $\mathtt{a}_{1}$ ; if  $\mathtt{w}\in\tilde{\mathcal{M}}$, $\mathtt{x}_{0}=\mathtt{x}_{m+1}=\emptyset$.
For $\omega\in\mathfrak{S}_{m}$, set $$\omega.\mathtt{w}=\mathtt{x}_{0}\mathtt{y}_{\omega(1)}\mathtt{y}_{\omega(2)}\dots\mathtt{y}_{\omega(m)}\mathtt{x}_{m+1},$$
and let
$$\mathtt{x}_{1}\preceq\mathtt{x}_{2}\preceq\dots\preceq\mathtt{x}_{m}$$
denote the sequence of factors of $\text{pr}_{\mathcal{M}}\pa{\mathtt{w}}$ sorted in increasing lexicographic order : almost surely, the first $\Theta\pa{n^{\varepsilon}}$ terms at the beginning of the sequence form  a \emph{strictly} increasing subsequence, according to Lemmas \ref{lemRn2} and \ref{En2}. The orbit of $\mathtt{w}$ under $\mathfrak{S}_{m}$ is 
$$\mathfrak{O}(\mathtt{w})=\ac{\mathtt{x}_{0}\mathtt{x}_{\omega(1)}\mathtt{x}_{\omega(2)}\dots\mathtt{x}_{\omega(m)}\mathtt{x}_{m+1}\ \vert\ \omega\in\mathfrak{S}_{m}}.$$
This is an extension, to $\text{pr}_{\mathcal{M}}^{-1}\pa{\tilde{\mathcal{M}}}$, of the definition of $\mathfrak{O}(\mathtt{w})$ when $\mathtt{w}\in\tilde{\mathcal{M}}$. More generally, any such orbit $\mathfrak{O}$  is characterized by $m$ and by the sequence $\pa{\mathtt{x}_{i}}_{0\le i\le m+1}$, in which $\pa{\mathtt{x}_{i}}_{1\le i\le m}$ is a sorted sequence of elements of $\tilde{\mathcal{X}}$, $\mathtt{x}_{0}$ is a run of non-$\mathtt{a}_{1}$ letters at the beginning of any element $\mathtt{v}\in\mathfrak{O}$, and $\mathtt{x}_{m+1}$ is the final run of the letter $\mathtt{a}_{1}$. 

Each element $\omega.\mathfrak{O}=\mathtt{x}_{0}\mathtt{x}_{\omega(1)}\mathtt{x}_{\omega(2)}\dots\mathtt{x}_{\omega(m)}\mathtt{x}_{m+1}$ of  $\mathfrak{O}$ yields  a partition of $[0,1)$ into  a (large) number ($m+2$) of subintervals with small widths
$$x_{j}=\dfrac{\abs{\mathtt{x}_{j}}}{\abs{\mathtt{w}}}, \quad 0\le j\le m+1,$$ 
in which the interval $[V_{i}(\omega),V_{i}(\omega)+x_{i})$ filled by the factor $\mathtt{x}_{i}$ depends on $\omega$ through its position : this position $V_{i}(\omega)$ is the sum of lengths of the factors $\mathtt{x}_{j}$ on the left of  $\mathtt{x}_{i}$,
$$V_{i}(\omega)=x_{0}+\sum_{j=1}^{m}x_{j}\ind_{\omega^{-1}(j)<\omega^{-1}(i)},$$
and the factor $\mathtt{x}_{j}$ is on the left of  $\mathtt{x}_{i}$ iff $\omega^{-1}(j)<\omega^{-1}(i)$. When convenient, set $V_{i}(\omega)=1$ for $i>m$. Then the successive records to the left, for the sequence $(V_{i})_{i\ge 1}$,  give the positions of the first factors in the Lyndon decomposition, as explained at Section \ref{sketch1}. Set :
\[\norm{\mathfrak{O}}_{2}=\norm{\pa{x_{j}}_{0\le j\le m+1}}_{2},\quad \norm{\mathfrak{O}}_{\infty}=\max\ac{x_{j},\ 0\le j\le m+1}.\]

As noted previously for the factorization in $\mathcal{X}$, the conditional distribution of $\mathtt{w}$ under $\Pn$, given that $\mathtt{w}$ belongs to some orbit $\mathfrak{O}$ with $m$ factors in $\tilde{\mathcal{X}}$, $\Pn\pa{.|\mathfrak{O}}$, is  uniform, but, for a nonrandom word $\mathtt{w}\in\mathfrak{O}$, if $\omega$ is a \emph{random permutation} in $\mathfrak{S}_{m}$, then $\omega.\mathtt{w}$ is uniformly distributed on the orbit $\mathfrak{O}$ too. 
We prove in this Section that, under 
$\Pn\pa{.|\mathfrak{O}}$, $V_{[d]}=(V_{i})_{1\le i\le d}$ is close to uniform on $[0,1]^{d}$, under mild conditions on $\mathfrak{O}$. More specifically, let $\mathbb U_A$ (resp. $ {\mathbb U}_{d}$) denote  the uniform probability distribution on a finite set $A$ (resp. the uniform distribution on $[0,1]^d$,  for a given integer $d$). Let $U=(U_i)_{i\ge 1}$ denote a sequence of i.i.d. random variables uniform on $[0,1]$, and let $U_{[d]}$ denote the sequence of its $d$ first terms, distributed according to  $ {\mathbb U}_{d}$. Recall that the $L_2$-Wasserstein metric $\mathcal{W}_2(.,.)$ is defined by
\begin{eqnarray}
\label{metric2}
\mathcal{W}_2(\mu,\nu)
&=&
\inf_{{\scriptstyle\mathcal{L}(X)=\mu}\atop{\scriptstyle\mathcal{L}(Y)=\nu}}\esp{}{\left\|X-Y\right\|_2^2}^{1/2},
\end{eqnarray}
in which $\mu$ and $\nu$ are probability distributions on $\mathbb R^d$, and $\norm{.}_2$ denotes the Euclidean norm  on $\mathbb R^d$. Convergence of $\mathcal L(X_n)$ to  $\mathcal L(X)$ with respect to   $\mathcal{W}_2(.,.)$ entails convergence of $X_n$ to $X$ in distribution (see \cite{Rachev}). The \emph{shuffle lemma} asserts that the Wasserstein distance between $V_{[d]}$ and $U_{[d]}$ is bounded by a simple expression depending on the maximal width of the subintervals in $\mathfrak{O}$. 

\begin{lemma}[Shuffle lemma]
\label{larcin2} For $m\ge d\ge1$, and for any orbit $\mathfrak{O}$ with sequence of factors $(\mathtt{x}_{i})_{0\le i\le m+1}$, 
\[\mathcal{W}_2(V_{[d]},\mathbb U_d)\le\sqrt{d/3}\norm{\mathfrak{O}}_{2}\le\sqrt{d\norm{\mathfrak{O}}_{\infty}/3}.\]
\end{lemma}
As a consequence of Propositions \ref{Gn3} and \ref{blocklength}, a.s. under $\Pn$, $\norm{\mathfrak{O}}_{\infty}=\bigO{\ln(n)/n}$ almost surely.
\begin{proof} The proof is similar to the proof of  \cite[Lemma 6.3]{MR2312436}, which is the special case $k=1$ of Lemma  \ref{larcin2}. As in the proof of  \cite[Lemma 6.3]{MR2312436}, we set
\[
\tilde{V}_i=x_0+\sum_{j:\,1\le j\le m,\atop\text{ and }U_j<U_i}\,x_j,
\]
and we note that $\tilde{V}_{[d]}=(\tilde{V}_i)_{1\le i\le d}$ has the same distribution as ${V}_{[d]}$. Among the many couplings between ${V}_{[d]}$ and ${U}_{[d]}$, this special one provides the desired bound on the Wasserstein distance. Actually, for some $j\in [\![1,d]\!]$, conditioning given $U_j$ and using $\sum x_{j}=1$, we obtain
\begin{eqnarray*}
\esp{}{(U_j-\tilde{V}_j)^2} &=&
\esp{}{\pa{x_0(U_j-1)+\sum_{i=1}^m x_i\pa{U_j-1_{\{U_i<U_j\}}}\ +x_{m+1}U_j}^2}
\\
&=&
\esp{}{(1-U_j)^2}\ x_0^2+\esp{}{U_j^2}\ x_{m+1}^2+\esp{}{U_j(1-U_j)}\ \sum_{i=1}^m x_i^2
\\
&=&\tfrac13\ \pa{x_0^2+x_{m+1}^2}\ +\ \tfrac16\ \sum_{j=1}^m\ x_j^2\ \le\ \tfrac13\ \norm{\mathfrak{O}}_{2}^{2}.
\end{eqnarray*}
The result follows from
\[\mathcal{W}_2(V_{[d]},\mathbb U_d)^{2}\le \sum_{j=1}^{d}\esp{}{(U_j-\tilde{V}_j)^2}.\] We struggle with the idea that such computations are new. Actually the argument can be adapted (taking the $x_i$'s in $\{0,1/n\}$) to compute the $L^2$ 
distance $(\sum t(1-t))/n$ between an evaluation $F_{n}(t)$ of the empirical distribution function and $t$, cf. \cite[Ch. 3.1, p.85, display (3)]{shorack2009empirical}.
\end{proof}

The shuffle lemma delivers the expected result, provided that the factorization of $\mathtt{w}$ in $\tilde{\mathcal{X}}$ behaves as follows, under $\Pn$ or under $\Pnt$ :
\begin{enumerate}
  \item as explained before, the distribution of the random word is invariant under a random uniform shuffle of the factors (subintervals) ;
    \item the length of the factors of $\mathtt{w}$ is typically $o(n)$ (actually, $\bigO{\ln n}$), while the Lyndon factors are $\Theta(n)$, so that  $$\norm{\mathfrak{O}}_{\infty}=\bigO{\ln n/n};$$
    \item almost surely under $\Pn$ or under $\Pnt$, $\text{pr}_{\mathcal{M}}\pa{\mathtt{w}}\in\tilde{\mathcal{M}}$.
\end{enumerate}
Section~\ref{run2} is devoted to preliminary results on some statistics on runs. Specially useful is the observation  that the length of the longest run of the letter $\mathtt{a}_1$ is typically of order $\log_{1/p_{1}}n$. In Section~\ref{building1}, we introduce the set $\mathcal{G}_{n}\subset \mathcal{A}^{n}$ of \emph{good words} that satisfy points 2 and 3, and we prove that the set $\mathcal{G}_{n}$ is almost sure.

Once these preliminary tasks are performed, we use the Shuffle Lemma \ref{larcin2}, to prove the main results, Theorem \ref{lyndonlengths} in Section \ref{Th2}, and Theorem \ref{loilimit2} in Section \ref{Th1}.


\section{Statistical properties of runs, under $\Pn$ or under $\Pnt$}
\label{run2}

For $\mathtt{w}\in\mathcal P_n$, let $\pi(\mathtt{w})$ denote the unique Lyndon word
in the necklace of $\mathtt{w}$.  For $s\ge 1$, we set
\[\norm{p}_{s}=\pa{\sum_{i}p_{i}^{s}}^{1/s}.\]
The next Lemma allows  to translate  bounds on $\P_{n}$ into bounds on $\Pnt$:
\begin{lemma}
\label{usarg2} For $A \subset \mathcal A^n$, we have:
 $$\mid \Pnt (A)- \P_{n}({\pi}^{-1} (A)) \mid = \bigO{\norm{p}_{2}^{n}}.$$
\end{lemma}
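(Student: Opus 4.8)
The plan is to rewrite $\Pnt$ explicitly in terms of $\P_n$ and $\pi$, and thereby reduce the statement to an estimate on the $\P_n$-mass $\P_n(\mathcal N_n)$ of the non-primitive words.

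First, since $\pi$ is defined on $\mathcal P_n$ and takes values in $\mathcal L_n$, while $\Pnt$ is carried by $\mathcal L_n$, we have $\pi^{-1}(A)=\pi^{-1}(A\cap\mathcal L_n)$ and $\Pnt(A)=\Pnt(A\cap\mathcal L_n)$, so we may assume $A\subset\mathcal L_n$. For a Lyndon word $w$, the fibre $\pi^{-1}(\{w\})$ is precisely the necklace $\langle w\rangle$, which has exactly $n$ elements because Lyndon words are primitive, and $p(\cdot)$ is constant along $\langle w\rangle$ since the weight of a word depends only on its multiset of letters; hence $\P_n(\pi^{-1}(\{w\}))=n\,p(w)$. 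Summing over $\mathcal L_n$ and using that $\mathcal P_n$ is the disjoint union of the necklaces it contains yields $\P_n(\mathcal P_n)=n\,\P_n(\mathcal L_n)$, i.e. $\lambda_n=n/\P_n(\mathcal P_n)$ (as recalled in the text). Consequently, for $A\subset\mathcal L_n$,
$$\Pnt(A)=\lambda_n\sum_{w\in A}p(w)=\frac1{\P_n(\mathcal P_n)}\sum_{w\in A}n\,p(w)=\frac{\P_n\pa{\pi^{-1}(A)}}{\P_n(\mathcal P_n)},$$
and therefore, using $\P_n(\pi^{-1}(A))\le1$,
$$\abs{\Pnt(A)-\P_n\pa{\pi^{-1}(A)}}=\frac{\P_n(\mathcal N_n)}{\P_n(\mathcal P_n)}\,\P_n\pa{\pi^{-1}(A)}\le\frac{\P_n(\mathcal N_n)}{\P_n(\mathcal P_n)}.$$
Since $\P_n(\mathcal P_n)>0$ for every $n$ (for instance $a_1^{n-1}a_j$ with $p_j>0$, $j\ge2$, is a primitive word of positive weight) and $\P_n(\mathcal N_n)\to0$ by the estimate below, $\P_n(\mathcal P_n)$ is bounded below by a positive constant; so everything reduces to proving $\P_n(\mathcal N_n)=\bigO{\norm{p}_2^{n}}$.

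For that estimate, recall that a length-$n$ word is non-primitive if and only if it is of the form $v^q$ with $|v|=n/q$ for some prime divisor $q$ of $n$. For a fixed $q\mid n$ the map $v\mapsto v^q$ is injective and $p(v^q)=p(v)^q$, so
$$\P_n\pa{\ac{v^q\,:\,v\in\mathcal A^{n/q}}}=\sum_{v\in\mathcal A^{n/q}}p(v)^q=\pa{\sum_i p_i^{q}}^{n/q}=\norm{p}_q^{n},$$
and a union bound over the prime divisors of $n$ gives $\P_n(\mathcal N_n)\le\sum_{q\mid n,\ q\text{ prime}}\norm{p}_q^{n}$. The delicate point — and the step I expect to be the main obstacle — is to keep this bound of exact order $\norm{p}_2^{n}$ rather than the weaker bound $d(n)\,\norm{p}_2^{n}$, with $d(n)$ the number of divisors of $n$: one isolates the single term $q=2$, which contributes exactly $\norm{p}_2^{n}$, and bounds each of the at most $\omega(n)=\bigO{\log n}$ remaining terms (all with $q\ge3$) by $\norm{p}_3^{n}$, using that $\alpha\mapsto\norm{p}_\alpha$ is non-increasing. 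Because the standing hypothesis $0<p_1<1$ forces at least two letters to carry positive mass, one has $\norm{p}_\infty<\norm{p}_2$, hence $\norm{p}_3^{3}\le\norm{p}_\infty\,\norm{p}_2^{2}<\norm{p}_2^{3}$, i.e. $\norm{p}_3/\norm{p}_2\in(0,1)$; therefore $\omega(n)\,\norm{p}_3^{n}=o\pa{\norm{p}_2^{n}}$ and $\P_n(\mathcal N_n)\le\norm{p}_2^{n}+o\pa{\norm{p}_2^{n}}=\bigO{\norm{p}_2^{n}}$. Combined with the inequality of the previous paragraph this gives $\abs{\Pnt(A)-\P_n(\pi^{-1}(A))}=\bigO{\norm{p}_2^{n}}$ uniformly in $A$, which is the assertion of the Lemma.
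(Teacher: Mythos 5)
Your proof is correct, and it follows the same overall reduction as the paper: both arguments rest on the identity $\Pnt(A)=\P_{n}(\pi^{-1}(A))/\P_{n}(\mathcal P_n)$ (which you derive carefully from the necklace structure, while the paper takes it as immediate from the definition of $\lambda_n$), so that everything comes down to showing $\P_{n}(\mathcal P_n^c)=\bigO{\norm{p}_{2}^{n}}$. Where you genuinely diverge is in that key estimate. The paper defines the sets $\mathcal A_{n,k}$ of words of period $k$, computes $\Pn(\mathcal A_{n,k})=\norm{p}_{n/k}^{n}$, and applies M\"obius inversion to get the exact formula $\Pn(\mathcal P_n)=\sum_{d\mid n}\mu(d)\norm{p}_{d}^{n}$, from which the bound follows by isolating the smallest divisor $d_1\ge 2$. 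You instead observe that every non-primitive word is $v^q$ for some prime $q\mid n$, take a union bound $\P_n(\mathcal N_n)\le\sum_{q\mid n,\,q\text{ prime}}\norm{p}_{q}^{n}$, isolate $q=2$, and kill the remaining $\bigO{\log n}$ terms using the strict decrease $\norm{p}_{3}<\norm{p}_{2}$ (your argument via $\norm{p}_{3}^{3}\le\norm{p}_{\infty}\norm{p}_{2}^{2}<\norm{p}_{2}^{3}$, valid because $0<p_1<1$ forces at least two letters of positive mass, is right, and parallels the paper's handling of the factor $(n-2)\norm{p}_{d_2}^{n}$). The union bound is more elementary — no inversion formula needed, only an upper bound — and suffices for the Lemma; the paper's exact M\"obius identity buys sharper information, namely the precise asymptotics $\Pn(\mathcal P_n^c)\sim\norm{p}_{d_1}^{n}$ with $d_1$ the smallest prime divisor of $n$, which is not needed here. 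Your extra care about uniformity in $A$ and the lower bound on $\P_n(\mathcal P_n)$ is sound.
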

Note that $\norm{p}_{1}=1$, and that, under the assumption $\ac{0<p_{1}<1}$, $\norm{p}_{s}$ is strictly decreasing in $s$. Among other well known inequalities, we shall make use of  $\norm{p}_{2}\le\sqrt{\max p_{i}}\le\sqrt{\beta}$. For instance, the choice $A=\mathcal A^n$ leads to
\[\mid 1- \P_{n}(\mathcal P_n) \mid = \bigO{\norm{p}_{2}^{n}}= \bigO{\beta^{n/2}}.\]
Due to Lemma \ref{usarg2}, some  properties of statistics, such as the number of runs  and the length of the longest runs, that behave nicely under cyclic permutations, hold true a.s. for random Lyndon words as soon as they hold true a.s. for random words. That is, the next  Lemmas hold true under  $\Pnt$ as well as under $\P_{n}$. Thus they prepare simultaneously the proofs of Theorems \ref{lyndonlengths} and \ref{loilimit2}. The proofs of the results in this Section are tedious, and thus they are postponed to Section \ref{appendix}. 
\begin{defi}\label{defdebase} For any word $\mathtt{w}\in \mathcal A^n$, let  $N_n(\mathtt{w})$ denote  the number of runs in $\varphi(\mathtt{w})$, let $W_1(\mathtt{w}),W_2(\mathtt{w}), \dots, W_{N_n}(\mathtt{w})$ be their lengths, let $N_n^{(\mathtt{e})}(\mathtt{w})$ (resp. $M^{(\mathtt{e})}_n(\mathtt{w})$) denote the number of runs of the letter $\mathtt{e}$ in the word $\mathtt{w}$ (resp. the maximal length of such runs). Thus, for $\mathtt{w}\in\mathcal{A}^\star$, $N_n^{(\mathtt{a}_{1})}(\mathtt{w})=N_n^{(\mathtt{0})}\left(\varphi(\mathtt{w})\right)$.
\end{defi}
  
First, since Theorem \ref{lyndonlengths} deals with a sequence of factors that begin with a long run of $\mathtt{a}_1$, a large number of such runs is required: 
\begin{lemma}[Number of runs of  the letter  $\mathtt{a}_1$]
\label{Nn2}Set $\sigma^{2}=p_{1}(1-p_{1})$. For $(a,b)\in\R^{2}$ such that $a<\sigma^{2}$, we have
\begin{eqnarray*}
\Pn\pa{N_n^{(\mathtt{a}_{1})}< an+b } &=& \bigO{n^{-1}},
\end{eqnarray*}
and
\begin{eqnarray*}
\Pnt\pa{N_n^{(\mathtt{a}_{1})}< an+b   } &=& \bigO{n^{-1}}.
\end{eqnarray*}
\end{lemma}

We also need some information about the length of the
longest runs of ``$\mathtt{a}_1$'' in a word $\mathtt{w} \in \mathcal A^n$ and in its necklace $\langle \mathtt{w}\rangle$, for,
among these long runs, the longest is expected to be the prefix
of the smallest Lyndon factor of $\mathtt{w}$, or the prefix of the unique Lyndon word  in $\langle \mathtt{w}\rangle$. Also, the second longest is expected to be the prefix of the
second smallest Lyndon factor of $\mathtt{w}$ or the prefix of the standard right
factor of the Lyndon word in  $\langle \mathtt{w}\rangle$. Furthermore,
if Theorem \ref{loilimit2} is to be true,
there should exist at least two long runs and, if Theorem \ref{lyndonlengths} is to be true, the number of these long runs should grow indefinitely with $n$, like the number of Lyndon factors of the random word. These points are consequences of Theorems \ref{lyndonlengths} and \ref{loilimit2}, but they are also 
some of the steps of the proofs of these Theorems. They are addressed by the next Lemmas. According to Definition \ref{longrun1}, $H_{n}(\mathtt{w})$ is the number of long runs in the word $\mathtt{w}\in \mathcal{A}^{n}$.
\begin{lemma}[Number of long runs]
\label{lemRn2}
\begin{eqnarray*}
\Pn\pa{H_{n} \geq \alpha  n^\varepsilon} &=&
1-\bigO{n^{-1}},
\end{eqnarray*}
and
\[
\Pnt\pa{H_{n} \geq \alpha  n^\varepsilon}
=1-\bigO{n^{-1}},
\]
in which $\alpha$ is a positive constant smaller than $\sigma^{2}$.
\end{lemma}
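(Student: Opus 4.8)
The plan is to bound $H_n$ from below by a binomial random variable and then apply a Chernoff estimate, exactly as the behaviour of $N_n^{(0)}$ was controlled in Lemma~\ref{Nn2}; the passage from $\Pn$ to $\Pnt$ is handled afterwards through Lemma~\ref{usarg2}. As in the proof of Lemma~\ref{Nn2} I would read the run statistics on the infinite Bernoulli word $\omega$, with the associated independent sequences $\eta=(\eta_i)_{i\ge1}$ (the lengths of the $0$-runs of $\omega$) and $\theta$, $\xi$, recalling that $t_n$ and $\varphi$ have the same law. Write
\[
L=\ceil{(1-\varepsilon)\log_{1/{p_1}}n},\qquad N=\tfrac{p_1(1-p_1)}{2}\,n,\qquad q=\P(\eta_1\ge L)=p_1^{\,L-1}.
\]
Since $p_1^{\log_{1/{p_1}}n}=1/n$ and $L-1<(1-\varepsilon)\log_{1/{p_1}}n$, the strict decrease of $t\mapsto p_1^{\,t}$ gives $q\ge n^{-(1-\varepsilon)}$.

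First I would observe that the truncation at position $n$ can only shorten the \emph{last} $0$-run of $t_n(\omega)$, so the $i$-th $0$-run of $t_n$ has length exactly $\eta_i$ for every $i\le N_n^{(0)}-1$. Consequently, on the event $G=\ac{N_n^{(0)}\ge\ceil N}$ one has
\[
H_n\ \ge\ Z_n:=\#\ac{\,1\le i\le\ceil N-1:\ \eta_i\ge L\,},
\]
a sum of $\ceil N-1$ independent Bernoulli$(q)$ variables, and $\Pn(G^c)=\bigO{n^{-1}}$ by Lemma~\ref{Nn2}. Moreover
\[
\esp{}{Z_n}=(\ceil N-1)\,q\ \ge\ \pa{\tfrac{p_1(1-p_1)}{2}\,n-1}n^{-(1-\varepsilon)}=\tfrac{p_1(1-p_1)}{2}\,n^{\varepsilon}-n^{-(1-\varepsilon)} .
\]
With the constant $\alpha<\frac{p_1(1-p_1)}{4}$ of the statement one has $\frac{p_1(1-p_1)}{2}-2\alpha>0$, hence $2\alpha\,n^{\varepsilon}\le\esp{}{Z_n}$ for $n$ large, and the Chernoff bound for the lower tail of a binomial gives
\[
\P\pa{Z_n<\alpha\,n^{\varepsilon}}\ \le\ \P\pa{Z_n<\tfrac12\esp{}{Z_n}}\ \le\ \exp\pa{-\tfrac18\esp{}{Z_n}}=o\pa{n^{-1}},
\]
because $\esp{}{Z_n}$ grows like a positive power of $n$. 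Since $\ac{H_n<\alpha n^{\varepsilon}}\cap G\subseteq\ac{Z_n<\alpha n^{\varepsilon}}$, this yields $\Pn(H_n<\alpha n^{\varepsilon})\le\Pn(G^c)+\P(Z_n<\alpha n^{\varepsilon})=\bigO{n^{-1}}$, which is the first assertion.

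For $\Pnt$ I would use Lemma~\ref{usarg2} with $A=\ac{w:\ H_n(w)\ge\alpha n^{\varepsilon}}$. All rotations of a word share a necklace, and the family of $0$-runs of any one rotation is obtained from the cyclic $0$-runs of that necklace by splitting at most one of them into two pieces; hence the number of $0$-runs of length $\ge L$ differs by at most $1$ between any rotation and the cyclic count, so $H_n(\pi(w))\ge H_n(w)-2$ for every $w\in\mathcal P_n$. Therefore $\ac{w\in\mathcal P_n:\ H_n(w)\ge\alpha n^{\varepsilon}+2}\subseteq\pi^{-1}(A)$; picking $\alpha<\alpha'<\frac{p_1(1-p_1)}{4}$ and combining the first part applied with $\alpha'$ with $\Pn(\mathcal P_n^c)=\bigO{\norm{p}_2^n}$ from \eqref{boundprim2}, one gets $\Pn(\pi^{-1}(A))\ge1-\bigO{n^{-1}}$ for $n$ large, and Lemma~\ref{usarg2} gives $\Pnt(A)=\Pn(\pi^{-1}(A))+\bigO{\norm{p}_2^n}=1-\bigO{n^{-1}}$.

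The only points requiring care are bookkeeping ones: checking that the truncation at $n$ costs nothing more than discarding the last run, and verifying that the factor-of-two loss in the Chernoff step is precisely what turns the natural mean value $\frac{p_1(1-p_1)}{2}\,n^{\varepsilon}$ of the number of long runs into the constant $\frac{p_1(1-p_1)}{4}$ appearing in the statement; the cyclic-rotation inequality $H_n(\pi(w))\ge H_n(w)-2$ is the exact analogue of the bound $N_n^{(0)}(w)-1\le N_n^{(0)}(\pi(w))\le N_n^{(0)}(w)$ already used in Lemma~\ref{Nn2} and brings in no new idea.
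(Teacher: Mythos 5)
Your proof is correct and follows essentially the same route as the paper: lower-bound $H_{n}$ by a binomial count of long complete $0$-runs of the Bernoulli word, on the event controlled by Lemma~\ref{Nn2}, apply a binomial tail bound, and transfer to $\Pnt$ via Lemma~\ref{usarg2} together with the stability of $H_{n}$ under the rotation $\pi$. The only (harmless) differences are that you count all complete runs where the paper keeps only the odd- or even-indexed ones (the source of its constant $\tfrac{p_1(1-p_1)}{4}$), you use a Chernoff bound instead of Okamoto's inequality, and you absorb the additive loss in $H_{n}\circ\pi\ge H_{n}-2$ by passing to a slightly larger $\alpha'$, which matches the paper's handling of $H_{n}\circ\pi\ge H_{n}-1$.
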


\vspace{.4cm} Recall that $M^{(\mathtt{1})}_n$ denote the length of the largest run of non-$\mathtt{a}_{1}$ letters. We have:

\begin{lemma}[Large values of the longest runs]
\label{lemMnGV2} Under $\Pn$ or $\Pnt$, the probabilities of the events 
$\ac{M^{(\mathtt{0})}_n\ge 2\log_{1/{p_1}}{n}}$
 and 
 $\ac{M^{(\mathtt{1})}_n \ge2\log_{1/(1-{p_1})}{n}}$
 are $\bigO{n^{-1}}$.
\end{lemma}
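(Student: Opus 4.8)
The plan is to bound these probabilities by a union bound over all possible starting positions of a long run, using the explicit geometric law of the run-lengths from the proof of Lemma~\ref{Nn2}. First I would transfer everything to the Bernoulli setting: by Lemma~\ref{usarg2} it suffices to prove the estimates under $\Pn$, up to an error $\bigO{\norm{p}_2^n}=\bigO{\beta^n}$, which is negligible compared to $n^{-1}$ (and one must check that passing from $w$ to $\pi(w)$ only perturbs the longest run length of ``$a_1$'' by at most $1$, so that the event $\{M^{(0)}_n\ge 2\log_{1/p_1}n\}$ on Lyndon words is controlled by a similar event on words, exactly as in \eqref{picirc2}). Under $\Pn$, identify $\varphi(w)$ with the prefix $t_n(\omega)$ of a Bernoulli$(1-p_1)$ sequence.

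Next I would run the first-moment computation. For the event $\{M^{(0)}_n\ge h\}$ with $h=\ceil{2\log_{1/p_1}n}$, note that $M^{(0)}_n\ge h$ forces a run of at least $h$ consecutive $0$'s starting somewhere among positions $1,\dots,n-h+1$. For a fixed starting position, the probability that the next $h$ letters are all ``$a_1$'' is $p_1^{h}$, so
$$
\Pn\pa{M^{(0)}_n\ge h}\ \le\ (n-h+1)\,p_1^{h}\ \le\ n\,p_1^{2\log_{1/p_1}n}\ =\ n\cdot n^{-2}\ =\ n^{-1},
$$
using $p_1^{\log_{1/p_1}n}=n^{-1}$. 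The same argument with $1-p_1$ in place of $p_1$ gives $\Pn\pa{M^{(1)}_n\ge 2\log_{1/(1-p_1)}n}\le n^{-1}$. (One can be slightly more careful and condition on the run being \emph{maximal}, i.e. preceded by a $1$, which only helps; the crude bound already suffices.) Adding the $\bigO{\beta^n}$ transfer error from Lemma~\ref{usarg2} completes the estimate under $\Pnt$.

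I do not expect a serious obstacle here: the statement is a standard longest-run tail bound, and all the probabilistic infrastructure — the identification of $\varphi(w)$ with a Bernoulli prefix and the cyclic-permutation comparison via Lemma~\ref{usarg2} — is already in place from the proofs of Lemmas~\ref{Nn2} and~\ref{lemRn2}. The only mild point of care is the interaction with the necklace/Lyndon-word reduction: a long run of ``$a_1$'' in $w$ can be split by the cyclic rotation that produces $\pi(w)$, but since $\pi(w)$ is a rotation of $w$, any run of $w$ that is not split remains a run of $\pi(w)$, and at most one run is split, so $M^{(0)}_n(\pi(w))\le M^{(0)}_n(w)$ trivially and the desired one-sided bound goes through. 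Hence the lemma follows from the union bound plus Lemma~\ref{usarg2}.
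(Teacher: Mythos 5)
Your estimate under $\Pn$ is fine: the union bound over the $n-h+1$ possible starting positions, with $h=\ceil{2\log_{1/p_1}n}$ and $p_1^{h}\le n^{-2}$, gives exactly the $\bigO{n^{-1}}$ bound, and is an acceptable (indeed more elementary) substitute for the paper's computation, which instead bounds $\Pn\pa{M^{(1)}_n\le y}$ from below by $\pa{1-(1-p_1)^{\lfloor y\rfloor}}^n$ using the geometric run lengths $\theta_i$.

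The transfer to $\Pnt$, however, contains a genuine error. You claim that passing from $w$ to $\pi(w)$ can only \emph{split} runs, hence $M^{(0)}_n(\pi(w))\le M^{(0)}_n(w)$; this is false, and the inequality actually goes the wrong way for your argument. Both $w$ and $\pi(w)$ are linear readings of the same necklace: when $w$ begins and ends with runs of ``$a_1$'' (resp.\ of non-$a_1$ letters), these two boundary runs merge into a single cyclic run, and since a Lyndon word starts with the longest run of ``$a_1$'' of its necklace and never ends with the letter $a_1$, the rotation $\pi(w)$ realizes this merged run in full. Hence $M^{(0)}_n(\pi(w))\ge M^{(0)}_n(w)$, with possible strict inequality, and likewise for $M^{(1)}_n$; this is precisely why the paper writes $M^{(1)}_n\circ\pi(w)=\max\ac{M^{(1)}_n(w),\pa{X_1(w)+X_{N_n}(w)}\BBone_{w_1\neq a_1\neq w_n}}$ and then controls the extra boundary term by $2\,\Pn\pa{X_1\BBone_{a_1\neq w_1}\ge\log_{1/(1-p_1)}n}$, using invariance of $\Pn$ under word reversal. (The inequality \eqref{picirc2} you invoke concerns the \emph{number} of long runs $H_n$, which can only drop by one under merging; it says nothing about the maximal run length.) So as written, $\pi^{-1}\ac{M^{(0)}_n\ge h}$ is not contained in $\ac{M^{(0)}_n\ge h}$ and your reduction fails. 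The fix is short: either reproduce the paper's boundary-run argument, or run your union bound cyclically — the necklace has $n$ cyclic starting positions, each giving a run of length $\ge h$ with probability at most $p_1^{h}$, so the cyclic maximum, which dominates $M^{(0)}_n(\pi(w))$, exceeds $h$ with probability at most $n\,p_1^{h}\le n^{-1}$ — and then conclude with Lemma \ref{usarg2}.
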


The asymptotic behaviour of the Lyndon (resp. standard) factorization depends on $p$ only through $p_{1}$, and the reason appears in the proofs  of the previous Lemmas, to be found in Section \ref{appendix} : only the lengths and positions of the runs of $\mathtt{a}_{1}$ matter.


\section{Long blocks of words and good words}
\label{building1}


We mentioned in Section \ref{sketch2} the need to break ties between  the long runs: in order to do just that, we  introduced the stable monoid $\tilde{\mathcal{M}}$ and its minimal set of generators, the code $\tilde{\mathcal{X}}=(\tilde{\mathcal{M}}/\ac{\emptyset})^{2}\backslash(\tilde{\mathcal{M}}/\ac{\emptyset})$. 
\begin{defi}
\label{defi:longblock2} The elements of  $\tilde{\mathcal{X}}$ are of two sorts, that we call \emph{long} and \emph{short blocks} :
\begin{itemize}
\item the \emph{short blocks} are the elements  $\mathtt{x}$ of $\mathcal{X}$ such that $\abs{\mathtt{x}}_{\mathtt{0}}<r$ ;
\item the \emph{long blocks} are the elements  $\mathtt{y}$ of $\mathcal{M}$ that satisfy
\begin{itemize}
\item  $\mathtt{y}$ begin with a long run, 
\item $\abs{\mathtt{y}}\ge \tilde{r}$,
\item  $\mathtt{y}$ is minimal in that $\mathtt{y}\notin\ac{\mathtt{uv}\,\vert\ \mathtt{u},\mathtt{v}\in \mathcal{M}\backslash\ac{\emptyset}\text{~and~}\abs{\mathtt{u}}\ge \tilde{r}}$.
\end{itemize}
\end{itemize}
\end{defi}
When $\mathtt{w}$ belongs to the set $\mathcal G_n\subset\mathcal A^n$ of \emph{good words}, defined below,  $$\norm{\mathfrak{O}(\mathtt{w})}_{\infty}=\bigO{\ln n/n}$$ and Lemma \ref{larcin2} provides the desired asymptotically uniform distribution  for $V_{[d]}$, conditionally given $\mathfrak{O}(\mathtt{w})$, for any $d\ge1$.
\begin{defi}
\label{goodword2} A word $\mathtt{w}\in\mathcal A^n$ is a \emph{good word}
if  it satisfies the following conditions:
\begin{itemize}
\item[\textbf{i.}] $\mathtt{w}$ has at least $\lfloor\alpha n^\varepsilon\rfloor$ long blocks ;
\item[\textbf{ii.}] $\text{pr}_{\mathcal{M}}\pa{\mathtt{w}}\in\tilde{\mathcal{M}}$ ;
\item[\textbf{iii.}] if two long blocks have a common factor, its length is not larger than $\tilde{r}-1$,
\item[\textbf{iv.}] $M^{(\mathtt{0})}_n(\varphi(\mathtt{w})) \leq 2 \log_{1/p_1}{n}$,
 and $M^{(\mathtt{1})}_n(\varphi(\mathtt{w})) \leq 2 \log_{1/(1- p_1)}{n}$.
\end{itemize}
\end{defi}
In this section, we prove  that $\mathcal G_n$  has a large probability: 
\begin{prop}
\label{Gn3} Under $\Pn$  or $\Pnt$, the probability of  $\mathcal G_n^c$ is $\bigO{n^{2\varepsilon-1}\,\ln^2 n}$.
\end{prop}
Recall that $\tilde{r}=1+\lceil 3\log_{1/\beta} n \rceil $. For the proof of Proposition  \ref{Gn3}, we need a few lemmas:
\begin{lemma}
\label{En2}
 Denote by $\mathcal{E}_n$ the set of words   $\mathtt{w}\in\mathcal A^n$  in which  some $\tilde{r}$-letters long factor appears twice in the necklace
  $\langle \mathtt{w}\rangle$, at two non-overlapping positions:
 \[\mathcal{E}_{n}=\ac{\mathtt{w}\in\mathcal A^n\,|\,\exists(\mathtt{w}^{\prime},v,a,b)\in\langle \mathtt{w}\rangle\times\mathcal A^{\tilde{r}}\times(\mathcal A^\star)^2\text{ s.t. }\mathtt{w}^{\prime}=vavb}.\]
Then, under $\Pn$  or $\Pnt$, the probability of  $\mathcal{E}_n$ is $\bigO{n^{-1}}$.
\end{lemma}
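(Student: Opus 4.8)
The plan is a first-moment estimate under $\Pn$, followed by a transfer to $\Pnt$ via Lemma~\ref{usarg2}. Set $k=\ceil{3\log_{1/\beta}n}$ and assume $n$ large enough that $2k\le n$. I would first record the combinatorial reformulation: $w\in E_n$ precisely when the cyclic word underlying $\langle w\rangle$ carries two \emph{distinct, non-overlapping} occurrences of one and the same word of length $k$. Indeed, from such a pair of occurrences one rotates $w$ so that the first one starts at position $1$, producing a decomposition $w'=vavb$ with $v\in\mathcal A^{k}$ and $a,b\in\mathcal A^{\star}$; conversely, every decomposition $w'=vavb$ of this shape exhibits $v$ at the two position sets $\{1,\dots,k\}$ and $\{k+|a|+1,\dots,2k+|a|\}$, which, viewed as subsets of $\mathbb Z/n\mathbb Z$ and using $2k\le n$, are disjoint. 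Hence $E_n$ is the union, over the at most $n^{2}$ ordered pairs $(i,j)$ of starting positions whose length-$k$ cyclic windows $W_i,W_j$ are disjoint, of the events that the length-$k$ factors read along $W_i$ and along $W_j$ coincide.

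Next, under $\Pn$ the letters are i.i.d., so for a fixed such pair the $2k$ coordinates indexed by $W_i\cup W_j$ are independent, whence the probability that the two factors coincide is
$$\Pn\pa{w\restriction W_i=w\restriction W_j}=\sum_{v\in\mathcal A^{k}}p(v)^{2}=\norm{p}_{2}^{2k}.$$
A union bound then gives $\Pn(E_n)\le n^{2}\,\norm{p}_{2}^{2k}$. To conclude, I would use $\norm{p}_{2}^{2}=\sum_i p_i^{2}\le\max_i p_i\le\beta$ (since $p_1\le\beta$, while $p_i\le 1-p_1\le\beta$ for $i\ge 2$), so that $\norm{p}_{2}^{2k}\le\beta^{k}\le\beta^{3\log_{1/\beta}n}=n^{-3}$, and therefore $\Pn(E_n)\le n^{2}\,n^{-3}=n^{-1}=\bigO{n^{-1}}$.

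For $\Pnt$, observe that $E_n$ depends on $w$ only through its necklace, hence is invariant under cyclic permutation; consequently $\pi^{-1}(E_n\cap\mathcal L_n)=E_n\cap\mathcal P_n$, so Lemma~\ref{usarg2} (together with $\norm{p}_{2}^{n}=\bigO{\beta^{n/2}}=\bigO{n^{-1}}$) yields
$$\Pnt(E_n)=\Pn(E_n\cap\mathcal P_n)+\bigO{\beta^{n/2}}\le\Pn(E_n)+\bigO{n^{-1}}=\bigO{n^{-1}},$$
which is the second half of the statement.

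I do not anticipate a real obstacle here, the argument being a clean first-moment bound; the two points that need a little care are (i) phrasing the necklace condition as a union over pairs of \emph{disjoint} position windows, so that the independence of the $2k$ underlying letters is manifest and the collision probability is exactly $\norm{p}_{2}^{2k}$, and (ii) checking that the constant $3$ built into the block length $\ceil{3\log_{1/\beta}n}$ is precisely what makes the collision probability $n^{-3}$ dominate the $n^{2}$-term union bound, leaving exactly the claimed margin $\bigO{n^{-1}}$.
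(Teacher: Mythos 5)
Your proof is correct and follows essentially the same route as the paper: a union bound over the at most $n^2$ pairs of disjoint length-$\lceil 3\log_{1/\beta}n\rceil$ windows, with collision probability bounded by $\beta^{3\log_{1/\beta}n}=n^{-3}$ (the paper states this as a conditional probability bound, you compute it as $\norm{p}_2^{2k}$ and then bound it, which is the same estimate), followed by the transfer to $\Pnt$ via Lemma~\ref{usarg2} using $\pi^{-1}(E_n)=E_n\cap\mathcal P_n$. No issues.
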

A key argument of the proof of the main results breaks down if some long block of the factorization of a random word is a prefix of another long block, somewhere else in the word. In order to preclude that,  we shall consider blocks with at least $\tilde{r}$  letters (at least thrice the length of the longest run(s)\footnote{The probability that there exists several runs with the same maximal length inside a $n$-letters long random word is non vanishing with $n$ large, so $\log_{1/p_1}n$ characters would be too short.} of the letter $\mathtt{a}_1$), and we shall use   Lemma \ref{En2}.
\begin{proof}
We have
\begin{equation}\label{ineqbloc} \Pn(\mathcal{E}_n)  =\bigO{n^2 \
\beta^{\tilde{r}}}= \bigO{n^{-1}},\end{equation} in
which $n^2$ is a bound for the number of positions of the pair of
factors of $\mathtt{w}$, and $\beta^{\tilde{r}}$ is a bound for
the conditional  probability that the second factor is equal to the first
factor, given the value of
the first factor and the positions of the factors.
Due to Lemma \ref{usarg2},
 $\Pnt(\mathcal{E}_n)$ satisfies
\[
\mid\Pnt(\mathcal{E}_n)-\Pn\pa{\pi^{-1}(\mathcal{E}_n)}\mid= \bigO{\beta^{n/2}},
\]
and $\pi^{-1}(\mathcal{E}_n)=\mathcal{E}_{n}\cap\mathcal P_{n}\subset \mathcal{E}_{n}$.
\end{proof}
As mentioned at the very beginning of the paper, a word $\mathtt{v}$ is a factor of $\langle \mathtt{w}\rangle$ as soon as it is a factor of some element $\mathtt{w}^{\prime}\in\langle \mathtt{w}\rangle\subset\mathcal{A}^{n}$.  As a consequence, an $\ell$-letters long factor  $\mathtt{v}$ of  $\mathtt{w}$ can be found at $n-\ell+1$ positions, while such a factor   $\mathtt{v}$ of $\langle \mathtt{w}\rangle$ has $n$ possible positions.
\begin{lemma}[Overlap of long blocks]
\label{Fn2} Let $\mathcal{F}_n$ denote the set of words $\mathtt{w}\in\mathcal A^{n}$
such that some factor  of $\langle \mathtt{w}\rangle$, 
$\lceil7\log_{1/\beta} n\rceil$-letters long, contains two
disjoint long runs.  
 Then, under $\Pn$  or $\Pnt$, the probability of  $\mathcal{F}_n$ is $\bigO{n^{2\varepsilon-1}\,\ln^2 n}.$
\end{lemma}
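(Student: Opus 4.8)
The plan is a direct first‑moment (union‑bound) estimate, entirely parallel to the proof of Lemma~\ref{En2} and the tail bounds of Lemma~\ref{lemMnGV2}. Write $L=\ceil{7\log_{1/\beta}n}$ for the window length and $\ell=\ceil{(1-\varepsilon)\log_{1/p_1}n}$ for the minimal length of a long run. The strategy is: union over the $n$ windows of $L$ consecutive positions in the necklace, and, inside a fixed window, union over all pairs of non‑overlapping blocks of $\ell$ consecutive ``$a_1$'''s.

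First I would replace $F_n$ by a more tractable event. If a factor $W$ of $\langle w\rangle$ of length $L$ contains two disjoint long runs of ``$a_1$'', then — since each long run has length at least $\ell$ and two distinct runs of $a_1$ are separated by a non‑$a_1$ letter — the first $\ell$ letters of each of those runs give two non‑overlapping all‑$a_1$ blocks of length $\ell$, both lying inside $W$. Hence, indexing positions cyclically modulo $n$,
\[
F_n\ \subseteq\ \bigcup_{s=0}^{n-1}\ \bigcup_{\substack{0\le i<j\le L-\ell\\ j\ge i+\ell}}\ \ac{\,w\ :\ w_{s+i+1}=\dots=w_{s+i+\ell}=w_{s+j+1}=\dots=w_{s+j+\ell}=a_1\,},
\]
where $s$ indexes the starting position of a window and $(i,j)$ runs over the at most $L^2$ pairs of disjoint $\ell$‑blocks inside that window. (If $L<2\ell$ the inner union is empty and $F_n=\varnothing$, so there is nothing to prove.)

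Next I would bound $\Pn(F_n)$. Under $\Pn$ the letters are i.i.d.\ with $\Pn(w_k=a_1)=p_1$, and for each fixed $(s,i,j)$ the $2\ell$ positions above are pairwise distinct, so the corresponding event has probability $p_1^{2\ell}$. Since $p_1^{\ell}=p_1^{\ceil{(1-\varepsilon)\log_{1/p_1}n}}\le p_1^{(1-\varepsilon)\log_{1/p_1}n}=n^{-(1-\varepsilon)}$ and $L^2=\bigO{\log^2 n}$, the union bound gives
\[
\Pn(F_n)\ \le\ n\,L^2\,p_1^{2\ell}\ =\ \bigO{\,n\cdot\log^2 n\cdot n^{-2(1-\varepsilon)}\,}\ =\ \bigO{\,n^{2\varepsilon-1}\log^2 n\,}.
\]
For the bound under $\Pnt$, I would note that $F_n$ is defined through the necklace $\langle w\rangle$, hence is invariant under cyclic permutation, so $\pi^{-1}(F_n)=F_n\cap\mathcal P_n\subseteq F_n$; combining this with Lemma~\ref{usarg2} and $\norm{p}_2\le\sqrt\beta$ yields $\Pnt(F_n)\le\Pn(F_n)+\bigO{\beta^{n/2}}=\bigO{n^{2\varepsilon-1}\log^2 n}$.

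The only delicate point is the reduction in the first step: checking carefully that the two $\ell$‑blocks extracted from the two disjoint long runs genuinely lie inside the length‑$L$ window and are disjoint, and that the cyclic union over the $n$ windows is counted without gaps or overlaps issues. Once that set inclusion is in place, the remainder is a one‑line i.i.d.\ computation with no analytic content, so I do not expect any real obstacle beyond this bookkeeping.
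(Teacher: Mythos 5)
Your proposal is correct and follows essentially the same route as the paper: a union bound with a factor $n$ for the window position, a factor $\bigO{\log^2 n}$ for the positions of the two runs inside the window, and $n^{2\varepsilon-2}$ for the probability of two disjoint long runs at specified positions, with the extension to $\Pnt$ via Lemma~\ref{usarg2}. Your explicit reduction to two disjoint all-$a_1$ blocks of length $\ceil{(1-\varepsilon)\log_{1/p_1}n}$ just makes precise the probability estimate the paper states directly.
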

\begin{proof} The bound for $\Pn(\mathcal{F}_n)$ has a factor $n$ for the position of the $\lceil7\log_{1/\beta} n\rceil$-letters long factor, a factor 
$49 (\log_{1/\beta} n)^2$ (a crude bound) for the positions of the 2 runs inside this factor, and a factor  $n^{2\varepsilon-2}\ge p_{1}^{r}\times p_{1}^{r}$ for the probability of 2 disjoint $r$-letters long runs at 2 specified positions. The proof extends to 
$\Pnt$  according to Lemma \ref{usarg2}.\end{proof}

\begin{lemma}\label{In} Let $\mathcal{I}_n$ denote the set of words
$\mathtt{w}\in\mathcal A^{n}$ whose suffix of length
$2\tilde{r}$ contains a long run of ``$\mathtt{a}_1$''.
Then, under $\Pn$  or $\Pnt$, the probability of  $\mathcal{I}_n$ is $\bigO{n^{2\varepsilon-1}\,\ln^2 n}$.
\end{lemma}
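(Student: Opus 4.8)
The plan is to establish the bound under $\Pn$ by a plain union bound over the few positions of the suffix where a long run could begin, and then to transfer it to $\Pnt$ via Lemma~\ref{usarg2}. The delicate point is the transfer: the event $I_n$ is \emph{not} invariant under cyclic permutations, so Lemma~\ref{usarg2} does not apply to it directly; the fix is to use the minimality of Lyndon words to replace $I_n$, up to negligible events, by the cyclically invariant event ``the necklace contains two disjoint long runs inside a short window'', which we already control as in Lemma~\ref{Fn2}.

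\emph{The case of $\Pn$.} Set $\ell=\ceil{6\log_{1/\beta}n}$ and let $L=\ceil{(1-\varepsilon)\log_{1/p_1}n}$ be the shortest admissible length of a long run. If $w\in I_n$, then at least $L$ consecutive letters ``$a_1$'' occur inside the suffix $w_{n-\ell+1}\dots w_n$, so $w_j=w_{j+1}=\dots=w_{j+L-1}=a_1$ for some index $j$ with $n-\ell+1\le j\le n-L+1$. For each such fixed $j$ this has probability $p_1^{L}\le p_1^{(1-\varepsilon)\log_{1/p_1}n}=n^{\varepsilon-1}$, and there are at most $\ell$ admissible values of $j$; hence $\Pn(I_n)=\bigO{\ell\,n^{\varepsilon-1}}=\bigO{n^{\varepsilon-1}\log n}$, which is $\bigO{n^{2\varepsilon-1}\log^2 n}$ because $\varepsilon>0$.

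\emph{The case of $\Pnt$.} Now $w$ is a Lyndon word, so $\pi(w)=w$. First I record the elementary fact that the last letter of a Lyndon word of length $\ge2$ is not $a_1$: from $w<\tau^{n-1}w$ one gets $w_1\le w_n$, while $w_1=w_n$ would propagate to every letter and contradict primitivity, so $w_n>w_1\ge a_1$. Hence, if $w\in I_n$, the long run contained in the suffix of $w$ is immediately followed by some letter $b\neq a_1$; writing $v=w_j\dots w_n=a_1^{k}b\dots$ for the proper suffix of $w$ beginning with this long run (so $k\ge L$), we have $w<v$ because $w$ is Lyndon, and comparing $w$ with $v$ letter by letter forces $w$ to begin with a run of ``$a_1$'' of length $m\ge k\ge L$ (otherwise $w$ would carry a letter $>a_1$ in a position where $v$ still carries $a_1$, whence $w>v$). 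Thus $w$ begins with a long run. Consequently, on the event $\{M^{(0)}_n\le2\log_{1/p_1}n\}$, membership of $w$ in $I_n$ forces the necklace $\langle w\rangle$ to contain two disjoint long runs of ``$a_1$''---the initial one and the one in the suffix---whose cyclic gap is at most $\ell$, so that both lie inside a window of $\bigO{\log n}$ consecutive letters. This event is cyclically invariant, so Lemma~\ref{usarg2} makes its $\Pnt$-probability equal, up to $\bigO{\beta^{n/2}}$, to its $\Pn$-probability, which is $\bigO{n^{2\varepsilon-1}\log^2 n}$ by exactly the union bound of the proof of Lemma~\ref{Fn2} (a factor $n$ for the window position, a factor $\log^2n$ for the positions of the two runs within it, and a factor $n^{2\varepsilon-2}$ for two disjoint runs at prescribed positions). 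Combining this with Lemma~\ref{lemMnGV2}, which bounds $\Pnt(M^{(0)}_n>2\log_{1/p_1}n)$ by $\bigO{n^{-1}}$, gives $\Pnt(I_n)=\bigO{n^{2\varepsilon-1}\log^2 n}$.

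The main obstacle, as indicated, is the middle step of the second case: one has to notice that a long run near the \emph{end} of a Lyndon word can coexist only with a long run at its \emph{beginning}, which is exactly what turns the non-invariant event $I_n$ into the invariant and rare event handled by Lemma~\ref{Fn2}. The remaining bookkeeping of window lengths only affects the constant multiplying $\log_{1/\beta}n$ and not the final estimate.
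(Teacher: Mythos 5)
Your proof is correct and follows essentially the same route as the paper: a union bound over the $\lceil 6\log_{1/\beta}n\rceil$ possible starting positions gives $\Pn(I_n)=\bigO{n^{\varepsilon-1}\log n}$, and the transfer to $\Pnt$ uses exactly the paper's key observation that a Lyndon word begins with a run of ``$a_1$'' at least as long as any run in its suffix, so that $\pi^{-1}(I_n)$ is absorbed into the cyclically invariant event of Lemma~\ref{Fn2} and handled via Lemma~\ref{usarg2}. Your additional conditioning on $\ac{M^{(0)}_n\le 2\log_{1/p_1}n}$ via Lemma~\ref{lemMnGV2} is a harmless extra precaution (needed only if one insists that the two full maximal runs lie in the $\bigO{\log n}$ window, rather than just two disjoint stretches of at least $(1-\varepsilon)\log_{1/p_1}n$ letters ``$a_1$''), and does not change the estimate.
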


\begin{proof} We have
\[
\Pn(\mathcal{I}_n)
\le
2 n^{\varepsilon-1}\ \tilde{r}.
\]
The factor $2\tilde{r}$ bounds the
number of positions where such a long run could begin. The factor $n^{\varepsilon-1}={p_1}^{(1-\varepsilon) \log_{1/p_1}n}$ is the
probability that a long run begins at some given position. The
result for $\Pnt$ follows from Lemma \ref{usarg2}, Lemma \ref{Fn2}  and
$$\Pn\pa{\pi^{-1}(\mathcal{I}_n)} \le \Pn(\mathcal{F}_n).$$
\end{proof}

\begin{proof}[Proof of Proposition \ref{Gn3}]  Consider the sets
\[
\mathcal{V}_{n}=\ac{\mathtt{w}\in\mathcal{A}^n\,\left|\,
\mathtt{w}\text{ satisfies iv. and }H_{n}(\mathtt{w}) \geq \alpha  n^\varepsilon\right.}
\] 
and 
\[
\tilde{\mathcal{G}}_{n}=\mathcal{V}_{n}\backslash\pa{\mathcal{E}_{n}\cup \mathcal{F}_{n}\cup \mathcal{I}_{n}}.
\]
Then,  under $\Pn$  or $\Pnt$, the probability of  $\tilde{\mathcal{G}}_{n}^c$ is $\bigO{n^{2\varepsilon-1}\,\log^2 n}$, due to Lemmas \ref{lemRn2}, \ref{lemMnGV2}, \ref{En2} and \ref{In}, since, for instance, 
\[\Pn\pa{\tilde{\mathcal{G}}_{n}^c}\le \Pn\pa{\mathcal{V}_{n}^c}+\Pn\pa{\mathcal{E}_{n}}+\Pn\pa{\mathcal{F}_{n}}+\Pn\pa{\mathcal{I}_{n}} .\]
Let us prove that $\tilde{\mathcal{G}}_{n}\subset \mathcal{G}_{n}$ : consider a word  $\mathtt{w}\in \tilde{\mathcal{G}}_{n}$, and in order to prove that $\mathtt{w}$ satisfies conditions  \textbf{i.}  and \textbf{ii.} in Definition \ref{goodword2}, consider a  long run $\mathtt{u}=\mathtt{a}_{1}^{k}$ of $\mathtt{w}=\mathtt{tus}$, with $k\in[\![ r,2 \log_{1/p_1}{n}]\!] $ : is it the prefix of a long block $\mathtt{uv}$ ? 

The  eventual long block beginning with $\mathtt{u}$ ends with the run of non-$\mathtt{a}_{1}$ letters containing the character  $\mathtt{w}_{\abs{\mathtt{t}}+\tilde{r}}$ of the word $\mathtt{w}$, if $\mathtt{w}_{\abs{\mathtt{t}}+\tilde{r}}\neq \mathtt{a}_{1}$, or with the next run of non-$\mathtt{a}_{1}$ letters, if $\mathtt{w}_{\abs{\mathtt{t}}+\tilde{r}}= \mathtt{a}_{1}$. In this last case, $\mathtt{w}_{\abs{\mathtt{t}}+\tilde{r}}$ is part of a short run, else $\mathtt{w}$ would belong to  $\mathcal{F}_{n}$. Thus, in any case,  $$\abs{\mathtt{uv}}\le\tilde{r}+r +2 \log_{1/(1- p_1)}n<2 \tilde{r}.$$ 

Since $\mathtt{w}\notin \mathcal{I}_{n}$, we have 
$$\abs{\mathtt{t}}+\abs{\mathtt{uv}}\le n,$$
thus there is room enough to build a long block beginning with $\mathtt{v}$, by merging $\mathcal{X}$-factors of $\mathtt{w}$. Also $\mathtt{w}\notin\mathcal{F}_{n}$, thus $\mathtt{v}$ does not contain a second long run, and  long blocks overlap does not happen. Finally, $\mathtt{w}$ satisfies \textbf{ii.}, so there exists a long block for each long run, and since $\mathtt{w}\in \mathcal{V}_{n}$, $\mathtt{w}$ satisfies \textbf{i.}. Condition \textbf{iv.} is satisfied by definition of  $\mathcal{V}_{n}$ and \textbf{iii.} is satisfied because  $\mathtt{w}\notin \mathcal{E}_{n}$.
\end{proof}
\begin{prop}
\label{blocklength}
If $\mathtt{w}\in \mathcal G_n$, then $\norm{\mathfrak{O}(\mathtt{w})}_{\infty}\le 2\tilde{r}/n $.
\end{prop}
\begin{proof}
Note that by Definition \ref{defi:longblock2}, the short blocks of some word $\mathtt{w}\in \mathcal G_n$
begin with a short run of less than $r$ letters $\mathtt{a}_1$, and due to point \textbf{iv.} of Definition \ref{goodword2},   a run of less than   $2 \log_{1/(1- p_1)}{n}$ ``letters'' $\bar {\mathtt{a}}_1$, thus short blocks have less than 
$$r+2 \log_{1/(1- p_1)}{n}\le \tilde{r}$$
 letters, while its long blocks are not longer than
$$\tilde{r} -1+r-1+\lfloor 2\log_{1/(1-
p_1)} n\rfloor\le 2\tilde{r}.$$ 
For a long block, count $\tilde{r}$ letters for the minimal size
of a long block, plus eventually a run of ``$\mathtt{a}_1$'' (a short one,
due to point \textbf{ii.} of Definition \ref{goodword2}, at most
$r-1 $-letters long
starting before the $\tilde{r} $-limit)
and  a run of ``$\bar a_1$'', at most $\lfloor 2\log_{1/(1-
p_1)} n\rfloor$ letters, due to point
\textbf{iv.} of Definition \ref{goodword2}.
\end{proof}

In the two following sections, we prove separately the
main theorems, Theorem \ref{lyndonlengths} and Theorem
\ref{loilimit2}.

\section{Proof of Theorem \ref{lyndonlengths}}
\label{Th2}
Set $$s_{n,i}(\mathtt{w})=1-(\rho_{n,1}(\mathtt{w})+\rho_{n,2}(\mathtt{w})+\dots+\rho_{n,i}(\mathtt{w})) ;$$ $s_{n,i}(\mathtt{w})$ is the normalized position of the $i$th factor of the Lyndon decomposition of $\mathtt{w}$, meaning that the $i$th factor of $\mathtt{w}$ is $\mathtt{w}_{[n\,s_{n,i},n\,s_{n,i-1}-1]}$. The correspondance between 
$s^{(n)}=(s_{n,i})_{i\ge1}$ and $\rho^{(n)}$ is bicontinuous on $[0,1]^{\mathbb N}$, thus Theorem \ref{lyndonlengths} is equivalent to the convergence in distribution of $s^{(n)}$ to $s$, $s$ being the variant  of the stickbreaking process defined at Section \ref{mainresults}.

For any word $\mathtt{w}\in \mathcal G_n$, according to condition \textbf{ii.} of Definition \ref{goodword2}, $\text{pr}_{\mathcal{M}}\pa{\mathtt{w}}$ belongs to $\tilde{\mathcal{M}}$ and, as such, it has a unique factorization in $\tilde{\mathcal{X}}$, thus we can write :
\begin{equation}
\label{faktorn}
\mathtt{w}
=
\overline{\mathtt{a}}_1^{k(\mathtt{w})}Y_1(\mathtt{w})\dots Y_{M_n(\mathtt{w})-1}(\mathtt{w})Y_{M_n(\mathtt{w})}(\mathtt{w})\mathtt{a}_1^{L_{n}(\mathtt{w})},
\end{equation}
in which $k\geq 0$, $L_{n}(\mathtt{w})\geq 0$ and the $Y_i$'s are elements of  $\tilde{\mathcal{X}}$, either long blocks, or short blocks. Let $J_{i,n}(\mathtt{w})$, $1\le i\le H_n(\mathtt{w})$, denote the index of the $i$-th smallest block of $\mathtt{w}\in \mathcal G_n$: since $i\le H_n(\mathtt{w})$, $Y_{J_{i,n}}$ has to be a long block, and there are no ties among long blocks, so that $J_{i,n}$ is well defined on  $\mathcal G_n$. Let $V_{i,n}(\mathtt{w})$, $1\le i\le H_n(\mathtt{w})$,  denote the normalized position of $Y_{J_{i,n}}(\mathtt{w})$ in $\mathtt{w}$, defined as the ratio $|\mathtt{u}|/|\mathtt{w}|$, in which $\mathtt{w}$ has the factorization $\mathtt{w}=\mathtt{u}Y_{J_{i,n}}(\mathtt{w})\mathtt{v}$. The normalized position $V_{i,n}(\mathtt{w})$ is given by the formula :
\begin{equation}
\label{defbegin}
V_{i,n}(\mathtt{w})
=
\frac1{|\mathtt{w}|} \pa{k(\mathtt{w})+\sum_{j=1}^{J_{i,n}(\mathtt{w})-1} |Y_j(\mathtt{w})|}\ ;\hspace{.5cm}i=1,\dots,H_n(\mathtt{w}).
\end{equation}
For a word $\mathtt{w}\in \mathcal G_n$, it is convenient to complete the sequence $(V_{i,n}(\mathtt{w}))_{1\le i\le H_n(\mathtt{w})}$ by an infinite sequence of 0's, in order to form an infinite sequence 
$$V^{(n)}(\mathtt{w})=(V_{i,n}(\mathtt{w}))_{i\ge 1},$$
and for a word $\mathtt{w}\in\mathcal{A}^n\backslash\mathcal{G}_n$, let $V^{(n)}(\mathtt{w})$ be an infinite sequence of 0's.
For a word $\mathtt{w}\in\mathcal{G}_n$, this is not much of a perturbation, since the original sequence is very long : according to Lemma \ref{lemRn2},  the probability that $H_n(\mathtt{w})$ is smaller than $\alpha n^\varepsilon$ vanishes. 

Now let us address the $L_{n}$ first terms of $s^{(n)}$ : they form the sequence
$$\tfrac1n\ (n-1,n-2,\dots,n-L_{n}-1,n-L_{n}),$$
If $L_{n}(\mathtt{w})= \ell\ge 1$, the first $\ell$ factors of the Lyndon factorization are $\ell$ words reduced to one letter ``$a_{1}$''. Thus, for $1\le i\le \ell$,
\[s_{i,n}=1-\frac in.\]
Let $L$ denote a geometric random variable with parameter $1-p_{1}$, such that, for $\ell\ge 0$,
\[\mathbb P(L=\ell)= p_{1}^\ell(1-p_{1}),\]
and let $\xi$ denote its probability distribution. It turns out that $L_{n}$ converges in distribution to $L$. Let  $\Lambda_{n}$  denote the number of low records of the  sequence $(V_{i,n}(\mathtt{w}))_{1\le i\le H_n(\mathtt{w})}$. Then, as explained at Section \ref{sketch1}, the next $\Lambda_{n}$ terms of the sequence $s^{(n)}$, i.e. $s_{k+1,n}, s_{k+2,n}$, $\dots, s_{k+\Lambda_{n},n}$, are the low records of the sequence $(V_{i,n}(\mathtt{w}))_{1\le i\le H_n(\mathtt{w})}$. In light of this, the proof has $4$ steps :
\begin{enumerate}
\item $s$ can be described as the sequence of low records of a sequence $U=(U_i)_{i\ge 1}$   of i.i.d. random variables uniform on $[0,1]$,  appended with a prefix sequence of $L$ 1's, $L$ and $U$ independent, an operation that we denote $\mathfrak{A}(L,U)$, and that we  define more formally below ;
\item $(L_{n},V^{(n)})$ converges in distribution to $(L,U)$ ;
\item provided that $\mathfrak{A}$ has some regularity properties, $\mathfrak{A}(L_{n},V^{(n)})$   converges in distribution to $\mathfrak{A}(L,U)=s$ ;
\item since  $\lim_{n}\Lambda_{n}=+\infty$ in some sense, cf. Proposition \ref{nombrecord} below, $s^{(n)}$ and   $\mathfrak{A}(L_{n},V^{(n)})$ are close in some sense.
\end{enumerate}
The following bound is proven at Section \ref{secnombrecord} :
\begin{prop} For $\varepsilon$ previously chosen in $(0,1)$,
\label{nombrecord}
$$
\mathbb P_n(\Lambda_{n}\le \varepsilon\log n/3) = \bigO{\frac 1{\log n}}.
$$
\end{prop}

For points 1 and 3, let $T$ be the functional that shifts a sequence $u$ as follows:
\[T(u)=T(u_{1},u_{2},\dots)=(1,u_{1},u_{2}, \dots).\] 
Let $S$ be the functional that keeps track of the sequence of low records (in the broad sense) of a sequence $u$ of real numbers. 
The functional $S$ is well defined and is continuous on a set of measure 1 of $[0,1]^{\mathbb N}$, for instance on the set $\mathcal R$ of sequences $u$ without repetition such that $\liminf u=0$. Then the functional $\mathfrak{A}$ defined on $\mathbb N \times\mathcal R$ by
\[\mathfrak{A}(k,u)= T^k\circ S(u)\]
is continuous as well. Now $s$ has the same distribution as $\mathfrak{A}(L,U)$ as a 
consequence of the Markov property of $s$ and of the particular form of its transition kernel.

For point 4, note that the difference between the two sequences $s^{(n)}$ and $ \mathfrak{A}(L_{n},V^{(n)})$ is 
\[
\mathfrak{A}(L_{n},V^{(n)})-s^{(n)}
=
\pa{\tfrac1n,\tfrac2n,\dots,\tfrac kn,0,0,\dots,0,s_{k+\Lambda_{n}+1,n},s_{k+\Lambda_{n}+2,n},\dots}.
\]
Endowing $[0,1]^{\mathbb N}$ with the distance
\[
d(u,v)
=
\sum_{k\ge 1}2^{-k}|u_{k}-v_{k}|,
\]
we obtain
\[
d(s^{(n)},\mathfrak{A}(L_{n},V^{(n)}))
\le
\frac{8}{n}+2^{-L_{n}-\Lambda_{n}}\le
\frac{8}{n}+2^{-1-\Lambda_{n}}.
\]
This inequality and Proposition \ref{nombrecord} entail that 
\[
\esp{}{d(s^{(n)},\mathfrak{A}(L_{n},V^{(n)}))}
\le
\frac{8}{n}+\bigO{\frac 1{\log n}}+n^{-\varepsilon\ln(2)/3}.
\]
According to \cite{Billing2}[Th. 3.1], the previous bound entails that if $\mathfrak{A}(L_{n},V^{(n)})$ converges in distribution to $\mathfrak{A}(L,U)$,  then $s^{(n)}$ converges in distribution  to $\mathfrak{A}(L,U)$ too. Finally, for point 2, note that, under $\mathbb P_{n}$, $L_{n}$ has the same distribution $\xi_{n}$ as $L\wedge n$. This is perhaps clearer when one considers the word  $\overline{\mathtt{w}}$ obtained by reading the word $\mathtt{w}$ from right to left:  clearly, under $\mathbb P_{n}$, $\overline{L}_{n}$ defined by
\[\overline{L}_{n}(\mathtt{w})=L_{n}(\overline{\mathtt{w}})\]
has the same distribution as $L_n$, for $\mathtt{w}$ and $\overline{\mathtt{w}}$ have the same weight. But $\overline{L}_{n}$ has the same distribution as $L\wedge n$. Thus $L^{2}$ convergence of $L\wedge n$ to $L$ entails that
\begin{equation}
\label{geomwasser}
\mathcal{W}_2\pa{\xi_{n},\xi}
\pa{=\mathcal{W}_2\pa{\xi_{n}\otimes\mathbb U_d,\xi\otimes\mathbb U_d}}
= \bigO{n^{2}e^{-n}}.
\end{equation}

Let $\Pnb$ denote the conditional probability given $\mathcal G_n$:
\[\Pnb\pa{A}=\frac{\Pn\pa{A\cap\mathcal G_n}}{\Pn\pa{\mathcal G_n}}.\]
\begin{theorem}Under $\Pn$ or under $\Pnb$, $V^{(n)}$ converge in distribution to $U$.
\end{theorem}
\begin{proof}Set $$V_{[d]}^{(n)}=\pa{V_{i,n}}_{1\le i\le d},$$ and let $\nu_{{d,n}}$ be the the distribution of $V_{[d]}^{(n)}$  under $(\mathcal G_n,\Pnb)$. Due to \cite[Theorem 3.29]{Kallenberg}, we need to prove that, for any $d\ge 1$, the sequence  $V_{[d]}^{(n)}$ is, under $\mathbb G_n$, asymptotically uniform on $[0,1]^d$, which results from considerations in Sections \ref{sketch2} and \ref{sketch3} :
\begin{lemma}[Positions of the $d$ first smallest blocks]
\label{distance3}
We have
\[
\mathcal{W}_2\pa{\nu_{d,n},{\mathbb U_{d}}}
\le\sqrt{2d\tilde{r}/3n}.
\]
\end{lemma}

\begin{proof} For $\mathtt{w}\in\mathcal G_n$, let $\tau\in\mathfrak{S}_{M_n(\mathtt{w})}$ act on $\mathtt{w}$ by permutation of blocks :
$$
\tau.\mathtt{w}
=
\overline{\mathtt{a}}_1^{k}Y_{\tau(1)}(\mathtt{w})\dots Y_{\tau(M_n(\mathtt{w}))}(\mathtt{w})\mathtt{a}_1^{L_{n}(\mathtt{w})}.
$$
As in Section \ref{sketch}, let $\mathfrak{O}(\mathtt{w})$ denote its orbit under that action.
Let   $\mathfrak C_n$ denote the $\sigma$-algebra generated by $\mathcal C_n=\ac{\mathfrak{O}(\mathtt{w})\,;\ \mathtt{w}\in \mathcal G_n}$.  For $\mathfrak{O}\in\mathcal C_n$, let $\nu_{\mathfrak O,n}$ denote the conditional distribution of $V_{[d]}^{(n)}(\mathtt{w})$ given that $\mathtt{w}\in \mathfrak O$. Let $X(\mathtt{w})=\pa{X_i(\mathtt{w})}_{i\ge 0}$ denote the sequence of long blocks of $\text{pr}_{\mathcal{M}}\pa{\mathtt{w}}$ sorted in increasing
lexicographic order, ended by an infinite sequence of empty words, i.e. $X_i(\mathtt{w})=Y_{J_{i,n}}(\mathtt{w})$, if $1\le i\le H_n(\mathtt{w})$, else $X_i(\mathtt{w})=\emptyset$. 
Let $\Xi(\mathtt{w})=\pa{\Xi_i(\mathtt{w})}_{i\ge 0}$ be the corresponding
sequence of lengths. We have :
\begin{lemma}
\label{combine} The weight $p(.)$, $X$, $\Xi$, $H_n$, $L_{n}$ and $M_n$ are
$\mathfrak C_n$-measurable, and
$$
\Pnb = \sum_{\mathfrak{O}\in\mathcal C_n}\frac{\mathbb P_n(\mathfrak{O})}{\mathbb P_n(\mathcal G_n)}\ \mathbb U_\mathfrak{O}.
$$
Also, $\nu_{\mathfrak O(\mathtt{w}),n}$ is  the image of the uniform
probability on $\mathfrak{S}_{K_n(\mathtt{w})}$ by the application $\tau
\longmapsto  V_{d}^{(n)}(\tau.\mathtt{w})$.\end{lemma}
\begin{proof} The weight $p(\mathtt{w})$ depends only on the number of
letters $a_{1}$, $a_{2}$, \dots that compose the word $\mathtt{w}$, not on the
order of these letters in $\mathtt{w}$, thus $p(.)$ is constant on each
$\mathfrak{O}\in\mathcal C_n$: as a consequence, under $\Pnb$, the conditional
distribution of $\mathtt{w}$ given that $\mathtt{w}\in \mathfrak{O}$ is  $\mathbb U_\mathfrak{O}$. The statement about $X$, $\Xi$, $H_n$, $L_n$ and $M_n$ holds true because $\tilde{\mathcal{X}}$ is a code. Since $\mathcal C_n$ is a
partition of $\mathcal G_n$,  the relation in
Lemma \ref{combine} is  the desintegration of $\Pnb$
according to its conditional distributions given $\mathcal C_n$. The last part holds true because the distribution of $\tau
\longmapsto  \tau.\mathtt{w}$ is $\mathbb U_{\mathfrak O(\mathtt{w})}$.
\end{proof}

According to Lemma \ref{larcin2} and \eqref{defbegin},
\begin{equation}
\label{larcin3}
\mathcal{W}_2\pa{\nu_{\mathfrak{O},n},\mathbb U_d} \le \sqrt{d\norm{\mathfrak{O}}_{\infty}/3}.
\end{equation}
Then, Proposition \ref{blocklength}, with the desintegration in Lemma \ref{combine},  give the desired result. \end{proof}
This is for the proof under $\mathbb G_n$. Note that the conditional law $\tilde \nu$, given $A$, of a  $[0,1]^{d}$-valued random variable $X$, defined on a probabilistic space $\Omega$, is Wasserstein-close to its unconditional law $\nu$, if $A$ is close to $\Omega$ :
\begin{equation}
\label{conditionequn}
\mathcal{W}_2\pa{\nu,\tilde \nu}
\le
\sqrt{d\ \pr{\Omega\backslash A}}.
\end{equation}
As a consequence, Lemma \ref{distance3}, together with Proposition \ref{Gn3}, entails that, under $\Pn$,
\[
\mathcal{W}_2\pa{V_{[d]}^{(n)},{\mathbb U_{d}}}
 = \bigO{n^{-1/2+\varepsilon}\
\log n},
\]
for any $d\ge 1$, which ensures the convergence of $V^{(n)}$ to $U$ under  $\Pn$ too.
\end{proof}
The proofs of the weak convergence of $L_{n}$ and $V^{(n)}$, respectively, are complete, now the weak convergence of $(L_{n},V^{(n)})$ is a consequence of the asymptotic independence between  $L_{n}$ and $V^{(n)}$ : all the conditional distributions $\nu_{\mathfrak{O},n}$ have the same limit $\mathbb U_d$, thus $V_{[d]}^{(n)}$ is asymptotically independent of $\mathfrak C_n$, while, according  to Lemma \ref{combine}, $L_{n}$ is  $\mathfrak C_n$ measurable, i.e. constant on each $\mathfrak{O}$ (equal to $L_{n}(\mathfrak{O})$). More precisely, step by step,
\begin{itemize}
\item the distribution $\chi_{n}$ of $(L_{n},V_{[d]}^{(n)})$, under $\mathbb G_{n}$, has the desintegration 
$$
\chi_{n} = \sum_{\mathfrak{O}\in\mathcal C_n}\frac{\mathbb P_n(\mathfrak{O})}{\mathbb P_n(\mathcal G_n)}\ \delta_{L_{n}(\mathfrak{O})}\otimes\nu_{\mathfrak{O},n}\ ;
$$
\item relation \eqref{larcin3} has the straightforward extension \[
\mathcal{W}_2\pa{\delta_{L_{n}(\mathfrak{O})}\otimes\nu_{\mathfrak{O},n},\delta_{L_{n}(\mathfrak{O})}\otimes\mathbb U_d} \le \sqrt{d\norm{\mathfrak{O}}_{\infty}/3}\ ;
\]
\item due the desintegration of $\chi_{n}$, the previous bound entails that, under $\mathbb G_{n}$,\[
\mathcal{W}_2\pa{\chi_{n},{\xi_{n}\otimes\mathbb U_{d}}}
\le\sqrt{2d\tilde{r}/3n}.
\]
\item under $\Pn$,  $\mathcal{W}_2\pa{\chi_{n},{\xi_{n}\otimes\mathbb U_{d}}}$ still vanishes due to \eqref{conditionequn}, and \eqref{geomwasser} completes the proof of point 2.
\end{itemize}

Note that the largest (and shortest) Lyndon factors, that  begin with short blocks, or that do not begin with letter $\mathtt{a}_{1}$, do not appear in this list of $H_n$ factors, but, as a consequence of Theorem \ref{lyndonlengths},
 the total length of these largest factors is $o(n)$ : once normalized by $n$, their lengths do not contribute to the asymptotic behavior of the factorization.




\section{Proof of Theorem \ref{loilimit2}}
\label{Th1}




In this section, we extend  \cite[Theorem 6.4]{MR2312436}   to a general distribution on an infinite alphabet. The proof is similar to that of Theorem \ref{lyndonlengths} or of \cite[Theorem 6.4]{MR2312436}. For $n\ge2$, $\mathtt{w}\in\mathfrak{L}_{n}$ entails that $\text{pr}_{\mathcal{M}}\pa{\mathtt{w}}=\mathtt{w}$, thus $\mathfrak{L}_{n}\subset\mathcal{M}$,  and, according to Definition \ref{goodword2}, $\mathcal G_n\cap\mathfrak{L}_n\subset\tilde{\mathcal{M}}$.  As a consequence, any $\mathtt{w}\in \mathcal G_n\cap\mathfrak{L}_n$   has a unique factorization according to the code $\tilde{\mathcal{X}}$:
\[
\mathtt{w} = Y_0(\mathtt{w})Y_1(\mathtt{w})\dots Y_{K_n(\mathtt{w})-1}(\mathtt{w})Y_{K_n(\mathtt{w})}(\mathtt{w}),
\]
in which the $Y_i$'s stand either for a long block or for a short block. Moreover, $Y_0(\mathtt{w})$ is the smallest block. 

Let $\pnh$ denote the conditional distribution given that $\mathtt{w}\in \mathcal G_n\cap\mathfrak{L}_n$. As in the previous section, let $V_{2,n}(\mathtt{w})$ denote the normalized position of the second smallest block 
in the factorization of $\mathtt{w}$ according to the code $\tilde{\mathcal{X}}$, and let $\nu_n$ 
denotes the distribution of $V_{2,n}$ under $(\mathcal G_n\cap\mathfrak{L}_n,\pnh)$. We first prove that:
\begin{theorem}For the distribution of the position of the second smallest block, it holds that:
\label{distance2}
\[
\mathcal{W}_2\pa{\nu_n,\mathbb U_1} =
\bigO{\sqrt{\frac{\log n}{n}}}.
\]
As a consequence, under  $\pnh$, the moments of $V_{2,n}$ converge to
the corresponding moments of $\mathbb U_1$.
\end{theorem}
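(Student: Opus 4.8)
The plan is to apply Lemma \ref{larcin2} with $k=1$ to the block-decomposition of a good Lyndon word, using Corollary \ref{condition2} to supply the required permutation-invariance. Concretely, I would condition on an equivalence class $C\in\mathcal C_n$. On $C$, the multiset of blocks $\{Y_0,Y_1,\dots,Y_{K_n}\}$ is fixed, $Y_0=\beta_1$ is the smallest and plays the role of the initial interval $[0,x_0]$, and the ranks of the remaining blocks $Y_1,\dots,Y_{K_n}$ are uniformly random. Normalizing lengths by $n$, the second-smallest block has index $J_n$ and its normalized left endpoint is exactly $d_n$, which is $\tilde a_1$ in the notation of Lemma \ref{larcin2} (the marked interval is the one corresponding to the second-smallest block, whose width is $\Xi_1/n$, with no trailing interval needed, i.e.\ $x_{\ell+1}=0$). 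Hence, conditionally on $C$,
\[
\mathcal W_2\pa{\mathcal L(d_n\mid C),\mathbb U_1}^2\le\frac13\sum_{i=1}^{K_n(C)}\pa{\tfrac{\Xi_i(C)}{n}}^2\le\frac1{3n}\,\max_i\Xi_i(C).
\]

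Next I would remove the conditioning. Since $\mathcal W_2$ satisfies the convexity/mixture inequality $\mathcal W_2(\sum_C\alpha_C\mu_C,\sum_C\alpha_C\nu_C)^2\le\sum_C\alpha_C\,\mathcal W_2(\mu_C,\nu_C)^2$ (applied here with $\nu_C=\mathbb U_1$ for every $C$, whose mixture is again $\mathbb U_1$), the decomposition of $\Pnb$ in Corollary \ref{condition2} gives
\[
\mathcal W_2\pa{\nu_n,\mathbb U_1}^2\le\frac1{3n}\,\esp{\Pnb}{\max_i\Xi_i}.
\]
Then I invoke Remark \ref{blocklength}: on $\mathcal G_n\cap\mathcal L_n$ every block — long or short — has length $\bigO{\log n}$, so $\max_i\Xi_i=\bigO{\log n}$ deterministically under $\Pnb$. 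This yields $\mathcal W_2(\nu_n,\mathbb U_1)=\bigO{\sqrt{(\log n)/n}}$, as claimed. Convergence of moments follows because $\mathcal W_2$-convergence on the compact interval $[0,1]$ (both $d_n$ and $\mathbb U_1$ are supported there) implies convergence in distribution together with uniform integrability of all powers, hence convergence of all moments; alternatively one bounds $\abs{\esp{}{d_n^m}-\esp{}{\mathbb U_1^m}}$ directly by $m\,\mathcal W_2(\nu_n,\mathbb U_1)$ using the coupling from Lemma \ref{larcin2} and $\abs{x^m-y^m}\le m\abs{x-y}$ on $[0,1]$.

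The only genuinely delicate point is checking that the situation on each class $C$ really matches the hypotheses of Lemma \ref{larcin2} — in particular that the marked interval is a \emph{single} block and that $d_n$ is its left endpoint regardless of whether we are in the case $r_n(w)=1-1/n$ or $r_n(w)=1-d_n(w)$. This is handled by the remark preceding the theorem: in both cases $d_n$ is defined by \eqref{defbegin2} as the normalized position of the second-smallest block $Y_{J_n}$, and the case distinction only affects the later translation of $d_n$ into $r_n$, not the uniformity statement for $d_n$ itself. A minor bookkeeping issue is that $K_n$, hence the number $\ell$ of shuffled intervals, varies with $C$; but the bound $\sum_{i=1}^{K_n}x_i^2\le n^{-1}\max_i\Xi_i$ is uniform in $\ell$, so this causes no trouble. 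Everything else is routine.
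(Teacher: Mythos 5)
Your proof is correct and follows essentially the same route as the paper: condition on a class $C\in\mathcal C_n$, apply Lemma \ref{larcin2} with $k=1$ via the uniform permutation of blocks (Corollary \ref{condition2}), average using convexity of $\mathcal W_2^2$, and conclude with $\sum_i\Xi_i^2\le\pa{\sum_i\Xi_i}\max_i\Xi_i\le n\max_i\Xi_i$ together with the $\bigO{\log n}$ block-length bound of Remark \ref{blocklength}. The moment-convergence argument you give (boundedness of $d_n$ on $[0,1]$ plus the $\mathcal W_2$ coupling) is the standard one the paper leaves implicit, so nothing is missing.
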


\begin{proof} The proof of Theorem \ref{lyndonlengths} holds step by
step: for $\mathtt{w}\in \mathcal G_n\cap\mathfrak{L}_n$ and $\tau\in\mathfrak{S}_{K_n(\mathtt{w})}$, set
\[
\tau.\mathtt{w} = Y_0(\mathtt{w})Y_{\tau(1)}(\mathtt{w})Y_{\tau(2)}(\mathtt{w})\dots Y_{\tau(K_n(\mathtt{w})-1)}(\mathtt{w})Y_{\tau(K_n(\mathtt{w}))}(\mathtt{w}),
\]
and note that $\tau.\mathtt{w}$ still belongs to $\mathcal G_n\cap\mathfrak{L}_n$. Let $\mathfrak{O}(\mathtt{w})$ denote the orbit of $\mathtt{w}$ and let $\mathcal C_{n}$ be the class of orbits of elements of $\mathcal G_n\cap\mathfrak{L}_n$.  If $\mathfrak{O}\in\mathcal C_{n}$ and if $\nu_\mathfrak{O}$ is the conditional distribution of $V_{2,n}(\mathtt{w})$
given that $\mathtt{w}\in \mathfrak{O}$, then $\nu_\mathfrak{O}$ is also the image of the uniform
probability on $\mathfrak{S}_{K_n(\mathtt{w})}$ by the application $\tau
\longmapsto  V_{2,n}(\tau.\mathtt{w})$. Thus Lemma \ref{larcin2} leads to
\[
\mathcal{W}_2\pa{\nu_\mathfrak{O},\mathbb U_1}  \le\sqrt{\norm{\mathfrak{O}}_{\infty}/3}.
\]
Then, Proposition \ref{blocklength}, with the desintegration of $\nu_{n}$ along $\mathcal C_{n}$, entails
\[
\mathcal{W}_2\pa{\nu_n,\mathbb U_1} \le 
\sqrt{2\tilde{r}/n}.
\]
\end{proof}

Now, let us draw some additional consequences, for good \emph{Lyndon} words, of Definition \ref{goodword2}. 

\begin{prop}
\label{consGn2} A good Lyndon word $\mathtt{w}\in\mathcal G_n\cap\mathcal
L_n$ satisfies the following points:
\begin{enumerate}
  \item \label{point:overlap} each long block, by definition a factor of $\langle \mathtt{w}\rangle$, is also a factor of $\mathtt{w}$ ;
    \item \label{point:2longblocks} if $\lfloor\alpha n^\varepsilon\rfloor\ge 2$, there exists a smallest (resp. a second smallest) long block ;
  \item \label{point:order} given a sequence of long blocks of $\mathtt{w}$, $(\zeta_{i})_{1\le i\le k}$, \emph{sorted in increasing lexicographic order}, the sequence $(\zeta_{i}\mathtt{v}_{i})_{1\le i\le k}$ is also sorted in increasing lexicographic order, for any sequence of words, $(\mathtt{v}_{i})_{1\le i\le k}$ ;
  \item \label{point:prefix1} the smallest of the long blocks is a prefix of $\mathtt{w}$ ;
  \item \label{point:prefix2} either the second smallest of the long blocks is a prefix of  the standard right factor of $\mathtt{w}$, or $\mathtt{w}\in\mathtt{a}_1\mathfrak{L}_{n-1}$ and $r_{n}(\mathtt{w})=1-\frac1n$.
\end{enumerate}
\end{prop}
\begin{proof} Item
\eqref{point:overlap} follows from point \textbf{ii.} of
Definition \ref{goodword2}. Item \eqref{point:2longblocks} follows from points
\textbf{i.}  and \textbf{iii.} of Definition \ref{goodword2}, as \textbf{iii.} insures that the prefixes $\mathtt{x}_{[1,\tilde{r}]}$ and $\mathtt{y}_{[1,\tilde{r}]}$ of two long blocks $\mathtt{x}$ and $\mathtt{y}$ are different.  That item
\eqref{point:order} holds true  follows from  \textbf{iii.} again, since \textbf{iii.}  insures not only that $\mathtt{x}$ and $\mathtt{y}$ are not tied, but also that they are not prefixes of each other. Here it can be useful to remember a basic fact about the lexicographic order: if two words $\mathtt{t}_{1}$ and $\mathtt{t}_{2}$ have prefixes, respectively $\mathtt{s}_{1}$ and $\mathtt{s}_{2}$, such that $\mathtt{s}_{1}\prec \mathtt{s}_{2}$, it does not insure that $\mathtt{t}_{1}\prec \mathtt{t}_{2}$. However, under the additional condition that $\mathtt{s}_{1}$ is not a prefix of $\mathtt{s}_{2}$, $\mathtt{s}_{1}\prec \mathtt{s}_{2}$ entails $\mathtt{t}_{1}\prec \mathtt{t}_{2}$. Thus item \eqref{point:order} fails only if some $\zeta_{i}$ is a prefix of some $\mathtt{\zeta}_{j}$, $i<j$. But this would violate point \textbf{iii.} of Definition \ref{goodword2}. As a consequence of the definition of Lyndon words, $\mathtt{w}$ begins with one of the long runs of $\mathtt{a}_{1}$ in $\langle \mathtt{w}\rangle$. This long run is a prefix of some long block due to point \textbf{i.} of Definition \ref{goodword2}. This, together with item \eqref{point:order}, entails item \eqref{point:prefix1}.

For item \eqref{point:prefix2}, consider the two smallest long blocks, $\zeta_{1}\prec\zeta_{2}$, in the necklace $\langle \mathtt{w}\rangle$, and let $k_{1}$ and $k_{2}$  be the lengths of the runs they begin with:  $\zeta_{i}=\mathtt{a}_{1}^{k_{i}}\mathtt{u}_{i}$, $i\in\ac{1,2}$, in which the words $\mathtt{u}_{i}$ do not begin with the letter $\mathtt{a}_{1}$. We know that $\mathtt{w}$ begins necessarily with $\zeta_{1}$, see the considerations leading to item \eqref{point:order}. Either the second smallest word in $\langle \mathtt{w}\rangle$, $\mathtt{w}_{2}$, begins with $\zeta_{2}$, or $\mathtt{w}_{2}$ begins with $\mathtt{a}_{1}^{k_{1}-1}\mathtt{u}_{1}$, but, since $\mathtt{a}_{1}^{k_{1}-1}\mathtt{u}_{1}$ or $\zeta_{2}$ are at least $\lceil3\log_{1/\beta}n\rceil$-letters long, they cannot be prefixes of each other, due to  point  \textbf{iii.} of Definition \ref{goodword2}.  Thus $r_{n}(\mathtt{w})=1-\frac{1}n$ if $\mathtt{a}_{1}^{k_{1}-1}\mathtt{u}_{1}\prec\zeta_{2}$, and $r_{n}(\mathtt{w})=1-\frac{v}n$ if $\mathtt{a}_{1}^{k_{1}-1}\mathtt{u}_{1}\succ \zeta_{2}$. Here $v$ denotes the position of  $\zeta_{2}$ in $\mathtt{w}$.
\end{proof}


By Proposition \ref{consGn2}, the smallest of all these
factors is $Y_0(\mathtt{w})$. Let $J_{2,n}(\mathtt{w})$ denote the index of the
second smallest factor, so that $V_{2,n}$ is given by
\begin{equation}
\label{defbegin2} V_{2,n}(\mathtt{w})=\frac1n \sum_{i=0}^{J_{2,n}(\mathtt{w})-1} |Y_i(\mathtt{w})|.
\end{equation}
If  $\mathtt{w}\in \mathtt{a}_1\mathfrak{L}_{n-1}$,
\[
r_n(\mathtt{w}) = 1-1/n,
\]
 (incidentally, we shall see later that this happens with probability $p_1+o(1)$, according to \eqref{gndeln}), while if $\mathtt{w}\in \mathcal G_{n}\cap(\mathfrak{L}_n\backslash \mathtt{a}_1\mathfrak{L}_{n-1})$, the second smallest block $Y_{J_{2,n}(\mathtt{w})}$, also a long block, is a prefix of the standard right factor, by Proposition \ref{consGn2}, and
\[
r_n(\mathtt{w}) = 1-V_{2,n}(\mathtt{w}).
\]
When $\mathtt{w}\in \mathcal G_{n}$, both cases can be detected by inspection of the two smallest blocks. 

Let $\pnh$ denote the conditional probability given $\mathcal G_n\cap\mathfrak{L}_n$:
\[\pnh\pa{A}=\frac{\Pn\pa{A\cap\mathcal G_n\cap\mathfrak{L}_n}}{\Pn\pa{\mathcal G_n\cap\mathfrak{L}_n}}=
\frac{\Pnt\pa{A\cap\mathcal G_n\cap\mathfrak{L}_n}}{\Pnt\pa{\mathcal G_n\cap\mathfrak{L}_n}}.\]

As in \cite{MR2312436}, the key point is the invariance of  $\pnh$ under uniform random permutation of the blocks $\ac{Y_1(\mathtt{w}),\dots,Y_{K_n(\mathtt{w})}(\mathtt{w})}$.
\begin{nota}
For  $\mathtt{w} \in \mathcal G_n\cap\mathfrak{L}_n$, and $\tau\in\mathfrak{S}_{K_n(\mathtt{w})}$, we set
$$\tau.\mathtt{w}=Y_0(\mathtt{w})Y_{\tau(1)}(\mathtt{w})\dots Y_{\tau(K_n(\mathtt{w}))}(\mathtt{w}),$$
and
$$\mathfrak{O}(\mathtt{w})=\{\tau.\mathtt{w}\ :\  \tau \in \mathfrak{S}_{K_n(\mathtt{w})} \}.$$
\end{nota}

\begin{prop}
\label{equivclass2}
Assume that  $\mathtt{w}\in \mathcal G_n\cap\mathfrak{L}_n$, and $\mathtt{w}'\in \mathfrak{O}(\mathtt{w})$: then  $\mathtt{w}'\in \mathcal G_n\cap\mathfrak{L}_n$ and $\mathtt{w}'$ has the same  multiset of blocks as $\mathtt{w}$ (it has the same blocks, with the same multiplicity). As a consequence, for $\mathtt{w}, \mathtt{w}' \in \mathcal G_n\cap\mathfrak{L}_n$, either $\mathfrak{O}(\mathtt{w})=\mathfrak{O}(\mathtt{w}')$ or $\mathfrak{O}(\mathtt{w})\cap \mathfrak{O}(\mathtt{w}')=\emptyset$.
\end{prop}
This follows directly from Definition \ref{goodword2} and the
definition of a code. 
Let $\mathcal C_n=\ac{\mathfrak{O}(\mathtt{w})\,;\ \mathtt{w}\in \mathcal
G_n\cap\mathfrak{L}_n}$, and let $\mathfrak C_n$ denote the
$\sigma$-algebra generated by $\mathcal C_n$. Also,
 let $X(\mathtt{w})=\pa{X_i(\mathtt{w})}_{i\ge 0}$
be the sequence of blocks of $\mathtt{w}$ sorted in increasing
lexicographic order, ended by an infinite sequence of empty words,
and  let $\Xi(\mathtt{w})=\pa{\Xi_i(\mathtt{w})}_{i\ge 0}$ be the corresponding
sequence of lengths.
\begin{corollary}
\label{condition2} The weight $p(.)$, $X$, $\Xi$, $H_n$ and $K_n$ are
$\mathfrak C_n$-measurable, and
$$
\pnh = \sum_{\mathfrak{O}\in\mathcal C_n}\frac{\mathrm{Card}(\mathfrak{O})\
p(\mathfrak{O})}{\mathbb P_n(\mathcal G_n\cap\mathfrak{L}_n)}\ \mathbb U_\mathfrak{O}.
$$
For $\mathfrak{O}\in\mathcal C_n$, given that $\mathtt{w}\in \mathfrak{O}$, the ranks of the blocks $(X_{i})_{1\le i\le
K_{n}(\mathfrak{O})}$ are uniformly distributed.
\end{corollary}
\begin{proof} The weight $p(\mathtt{w})$ depends only on the number of
letters $a_{1}$, $a_{2}$, \dots that $\mathtt{w}$ contains, not on the
order of the letters in $\mathtt{w}$, so that $p(.)$ is constant on each
$\mathfrak{O}\in\mathcal C_n$: thus, under $\pnh$, the conditional
distribution of $\mathtt{w}$ given that $\mathtt{w}\in \mathfrak{O}$ is  $\mathbb U_\mathfrak{O}$. As a
consequence of Proposition \ref{equivclass2}, $\mathcal C_n$ is a
partition of $\mathcal G_n\cap\mathfrak{L}_n$, so the relation in
Corollary \ref{condition2} is just the desintegration of $\pnh$
according to its conditional distributions given $\mathcal C_n$.
\end{proof}

\vspace{0.4cm} As in \cite[Theorem 6.5]{MR2312436}, asymptotic
independence between $\mathfrak C_n$ and $V_{2,n}$ holds under $\pnh$: for  a
$\mathfrak C_n$-measurable $\mathbb R$-valued statistic $T_n$ with
probability distribution $\chi_n$,
\begin{eqnarray}
\label{joint2} \mathcal{W}_2\pa{(T_n,V_{2,n}),\chi_n\otimes\mathbb U_1}
= \bigO{\sqrt{\frac{\log n}{n}}}.
\end{eqnarray}
In order to prove Theorem \ref{loilimit2}, let $\mu_n$ (resp.
$\tilde\mu_n$) denote the image of $\Pnt$ (resp. of $\pnh$) by
$r_n$. Set
\[
\mathfrak{L}_{n}^1 = (\mathcal G_{n}\cap\mathfrak{L}_n) \cap
a_{1}\mathfrak{L}_{n-1}=\mathcal G_{n} \cap a_{1}\mathfrak{L}_{n-1},
\quad\text{and}\quad \mathfrak{L}_{n}^2 = (\mathcal G_{n}\cap\mathfrak{L}_n)
\backslash \mathfrak{L}_{n}^1.
\]
We remark that:
\begin{itemize}
\item[\textbf{i.}]  if $\mathtt{w}\in\mathfrak{L}_{n}^1$, $r_n(\mathtt{w})=1-\frac1n$ holds true\footnote{Actually, $r_n(\mathtt{w})=1-\frac1n$ holds true if $\mathtt{w}\in a_{k}\mathfrak{L}_{n-1}(a_{k},a_{k+1},\dots, a_{n})$, but, since $\mathtt{w}\in\mathcal G_{n}$, $\mathtt{w}$ contains at least one occurrence of the letter $a_{1}$, which precludes $\mathtt{w}\in a_{k}\mathfrak{L}_{n-1}(a_{k},a_{k+1},\dots, a_{n})$ for $k\ge 2$.} ; 
 
\item[\textbf{ii.}]  if $\mathtt{w}\in\mathfrak{L}_{n}^2$, $r_n(\mathtt{w})=1-V_{2,n}(\mathtt{w})$ ;

\item[\textbf{iii.}]  when $\mathtt{w}\in \mathfrak{L}_n\backslash( \mathcal G_n\cap\mathfrak{L}_n)$, the crude bound $0\le r_n(\mathtt{w})\le 1$ will prove to be more than 
sufficient for our purposes.
\end{itemize}

First, the conditional law $\tilde \nu$, given $A$, of a bounded  r.v. $X$, defined on a probabilistic space $\Omega$, is Wasserstein-close to its unconditional law $\nu$, if $A$ is close to $\Omega$. More precisely
\begin{equation}
\label{condition}
\mathcal{W}_2\pa{\nu,\tilde \nu}
\le
2\,\pr{\Omega\backslash A}^{1/2}\,\norm{X}_\infty.
\end{equation}
As a consequence,  point \textbf{iii.}, together with Proposition \ref{Gn3}, entails that
\[
\mathcal{W}_2(\mu_n,\tilde\mu_n) = \bigO{n^{-1/2+\varepsilon}\
\log n}.
\]
Thus we shall now work on $\mathcal G_n\cap\mathfrak{L}_n$,  under $\pnh$, for $\mu_n$ has the same asymptotic behavior as $\tilde\mu_n$.

On $\mathcal G_n\cap\mathfrak{L}_n$,  we have, according to points
\textbf{i.} and \textbf{ii.},
\begin{equation*}
r_n=f_n\pa{V_{2,n},\mathbf{1}_{\mathcal
L_{n}^2}}=(1-V_{2,n})\mathbf{1}_{\mathcal
L_{n}^2}+\pa{1-\frac1n}(1-\mathbf{1}_{\mathfrak{L}_{n}^2}).
\end{equation*}
The $\mathfrak C_n$-measurability
 of $\mathfrak{L}_{n}^2$
 (see \cite[Section 7]{MR2312436} for more details) and relation (\ref{joint2}) entails asymptotic independence between $\mathbf{1}_{\mathfrak{L}_{n}^2}$ and $V_{2,n}$, and more precisely it entails that
\begin{eqnarray}
\label{EQU1}
\mathcal{W}_2\pa{(\mathbf{1}_{\mathfrak{L}_{n}^2},V_{2,n}),\chi_n\otimes\mathbb U_1}
&=&
\bigO{\sqrt{\frac{\log n}{n}}}.
\end{eqnarray}
in which $\chi_n$ denotes the probability distribution of
$\mathbf{1}_{\mathfrak{L}_{n}^2}$. Thus, there exists a probability
space, and, defined on this probability space, a couple
$(T_{n},U)$ with distribution $\chi_n\otimes\mathbb U_1$,  and a
copy of $(\mathbf{1}_{\mathfrak{L}_{n}^2},V_{2,n})$ whose
$\mathbb L^2$ distance satisfies
\[
\norm{\mathbf{1}_{\mathfrak{L}_{n}^2}-T_{n}}_{2}^2+\norm{M_{n}-U}_{2}^2
=
\bigO{\frac{\log n}{n}}.
\] 
Set
 \[
\tilde r_n=(1-U)T_n+\pa{1-\frac1n}(1-T_n).
\]
The inequality
\[
\abs{f_n\pa{d,w}-f_n\pa{d^{\prime},w^{\prime}}}^2 \le
2\pa{\abs{d-d^{\prime}}^2+\abs{w-w^{\prime}}^2},
\]
that holds for $(w,w^{\prime},d,d^{\prime})\in[0,1]^4$, entails
that
\begin{eqnarray*}
\label{EQU2}
\nonumber
\mathcal{W}_2\pa{\tilde \mu_n,\tilde r_n}
&=&
\bigO{\sqrt{\frac{\log n}{n}}}.
 \end{eqnarray*}

Finally, using an optimal coupling $\pa{T_n, \widehat T_n}$ in
which $\widehat T_n$ is a Bernoulli random variable
 with expectation $1-p_1$, independent of $U$, set
\begin{eqnarray*}
\widehat r_n & = & (1-U)\widehat T_n+\pa{1-\frac1n}(1-\widehat
T_n).
\end{eqnarray*}
As above, we obtain easily
\begin{eqnarray*}
\mathcal{W}_2\pa{\tilde r_n, \widehat r_n} &\le& \mathcal W_2\pa{T_n, \widehat T_n}
\\
&\le& \sqrt{\abs{\pnh(\mathfrak{L}_{n}^2)-(1-p_1)}}.
\end{eqnarray*}
Also
\[
(1-U)\widehat T_n+(1-\widehat T_n) =
 \widehat r_n
+\frac1n(1-\widehat T_n)
\]
 has distribution
$\mu$. Thus
\[\mathcal{W}_2\pa{\widehat r_n,\mu}\le\frac1n.
\]
Now
\begin{equation*}
{\Pn (\mathtt{a}_1 \mathfrak{L}_{n-1})- \Pn(\mathfrak{L}_{n}\backslash\mathcal G_{n})}
\le
\Pn( \mathfrak{L}_{n}^1)\le {\Pn(\mathtt{a}_1 \mathfrak{L}_{n-1})}.
\end{equation*}
So by Proposition \ref{Gn3} and the fact that $\Pn(\mathfrak{L}_{n})=
\frac 1n (1-O(\beta^{n/2}))$, we obtain
\begin{equation}
\label{gndeln}
\abs{\pnh(\mathfrak{L}_{n}^1)-p_1}=\bigO{(\log n)^2\
n^{2\varepsilon-1}}
\end{equation}
and
\[\mathcal{W}_2(\tilde r_n,\hat r_n) = \bigO{
n^{-1/2+\varepsilon}\log n}.
\]
With \eqref{EQU1}, this  yields
\[
\mathcal{W}_2(\mu_n,\mu)
=
\bigO{ n^{-1/2+\varepsilon}\log n}.\]
Since $0\le r_n\le 1$, convergence of moments follows.\hspace{4cm}$\square$


\bibliographystyle{amsalpha}
\bibliography{Lyndonfact}
\section{Runs statistics: proofs}
\label{appendix}
\subsection{Asymptotically almost sure properties in $\mathcal A^n$ vs $\mathcal P_n$: proof of Lemma \ref{usarg2}} This proof rephrases in probabilistic
terms some results of \cite[Section 7.1]{Reutenauer}, to which the
reader is referred for definitions. Let us define two sequences of
subsets of $\mathcal A^n$,
\begin{eqnarray*}
\mathcal A_{n,k} & = & \ac{\mathtt{w}\in\mathcal A^n\ |\ \exists \mathtt{v}\in
\mathcal A^k\text{ such that }\mathtt{w}=\mathtt{v}^{n/k}},
\\
\mathcal P_{n,k} & = & \mathcal
A_{n,k}\backslash\pa{\build{\bigcup}{1\le i< k}{}\mathcal
A_{n,i}},
\end{eqnarray*}
with probabilities $\nu_k=\Pn(\mathcal A_{n,k})$ and
$\xi_k=\Pn(\mathcal P_{n,k})$, respectively. Clearly 
\[\mathcal A_{n,n}=\mathcal A^n,\qquad \mathcal P_{n,n}=\mathcal P_n.\]
Also, if $k\vert n$, $\pa{\mathcal P_{n,i}}_{i\vert k}$  is a partition of $\mathcal A_{n,k}$ (else, both $\mathcal A_{n,k}$ and $\mathcal P_{n,k}$ are empty). Thus
\[
\nu_k = \sum_{d\vert k}\xi_d,
\]
and, by the M\"{o}bius inversion formula,
\begin{eqnarray}
\label{mobius2} \xi_k = \sum_{d\vert k}\mu(d)\nu_{k/d},
\end{eqnarray}
in which $\mu(d)$ denotes the M\"{o}bius function. On the other
hand, when $k\vert n$,
\begin{eqnarray*}
\nu_k &=& \sum_{\mathtt{w}\in\mathcal A_{n,k}}p(\mathtt{w})
\\
&=& \sum_{\mathtt{v}\in\mathcal A^k}p(\mathtt{v})^{n/k}
\\
&=& \sum_{\sum_i r_i=k}{{k}\choose{r_1,r_2,\dots}}\pa{p_1^{r_1} p_2^{r_2}\dots}^{n/k}
\\
&=& \norm{p}_{n/k}^n.
\end{eqnarray*}
Specializing \eqref{mobius2} to $k=n$, we obtain
\begin{eqnarray}
\label{mobius1} \Pn(\mathcal P_n) = \sum_{d\vert
n}\mu(d)\norm{p}_{d}^n.
\end{eqnarray}
Let the set of divisors of $n$ be $\ac{1< d_{1}< d_{2}<\dots<
d_{\ell}=n}$. Then, by \eqref{mobius1},
\begin{eqnarray*}
\abs{\Pn(\mathcal P_n)-1+\norm{p}_{d_{1}}^n} &\leq&
(\ell-1)\norm{p}_{d_{2}}^n
\\
&\leq& (n-2)\norm{p}_{d_{2}}^n,
\end{eqnarray*}
if $n$ is not prime. Else $\Pn(\mathcal
P_n)=1-\norm{p}_{d_{1}}^n$. In any case, $\abs{\Pn(\mathcal
P_n)-1+\norm{p}_{d_{1}}^n}$ is a $o\pa{\norm{p}_{d_{1}}^n}$, and,
since $d_{1}\ge 2$,
\begin{equation}
\label{boundprim2} \Pn(\mathcal P_n^c)=\bigO{\norm{p}_{2}^n}.
\end{equation}
Lemma \ref{usarg2} is a direct consequence of
\[
\Pnt(A) = \frac{\P_{n}({\pi}^{-1}(A))}{\P_{n}(\mathcal P_n)}\,,
\]
and of \eqref{boundprim2}.

\subsection{Alternative representations for $\Pn$}
The asymptotic behaviour of the factorizations of $n$-letters general random words is predicted by the lengths and positions of runs of the letter $\mathtt{a}_{1}$, provided that these lengths and positions satisfy a set of properties that hold true, but for a vanishing probability as $n$ grows, for what we call \emph{good words} (see Definition \ref{goodword2}). Thus, a random word  $\mathtt{w}\in \mathcal A^n$ is a good word, or not, depending on  $\varphi_{n}(\mathtt{w})$. The proof that the probability of bad words vanishes relies on two descriptions, given in this section, of the probability distribution $\Bn$ of $\varphi_{n}(\mathtt{w})$ under $\Pn$. 

Under $\Pn$, $\varphi_{n}$ is a $\mathcal B^n$-valued random variable, a random word, a sequence of $n$ independant symbols, each of them being a $\mathtt{0}$ with probability $p_{1}$, a $\mathtt{1}$ with probability $1-p_{1}$ : $\Bn$ denote the probability distribution of $\varphi_{n}$, i.e. the push-forward of $\Pn$ by $\varphi_{n}$. For the next proofs, however, $\Bn$ shall be seen as the push-forward of two probability measures on the set $\mathcal B^{\N}$ of infinite words, by the truncation operation  $\psi_{n}$  defined, for $\omega\in \mathcal B^{\N}$, by:
$$\omega = \omega_1\omega_2\omega_3\dots\quad\longrightarrow\quad\psi_{n}(\omega)=\omega_{[1,n]}.$$
First, $\Bn$ is the probability distribution of $\psi_{n}$ under the product measure $$\B=\left(p_{1}\delta_{\mathtt{0}}+(1-p_{1})\delta_{\mathtt{1}}\right)^{\otimes\N}.$$ 
Next, let $\eta=(\eta_{n})_{n\ge1}$ (resp. $\theta=(\theta_{n})_{n\ge1}$) be a sequence of independent  geometric random variables with expectation $(1-p_1)^{-1} $ (resp. with expectation ${p_1}^{-1}$),  defined on some probability space $(\Omega,\mathcal{F},\mathbb{P})$,  let $\xi$ be a Bernoulli random variable with parameter $1-p_1$, and assume that $\xi$, $\eta$ and $\theta$ are independent.
For $m\ge 1$, set:
\begin{align*}
S_{m}&=\sum_{k=1}^{m}(\eta_{k}+\theta_{k}),
\end{align*}
and consider the infinite random word
$$ \Upsilon\ =\ 
\begin{cases}
\mathtt{0}^{\eta_{1}}\mathtt{1}^{\theta_{1}}\mathtt{0}^{\eta_{2}}\mathtt{1}^{\theta_{2}}\dots&\text{if~}\xi=0,
\\
\mathtt{1}^{\theta_{1}}\mathtt{0}^{\eta_{1}}\mathtt{1}^{\theta_{2}}\mathtt{0}^{\eta_{2}}\dots&\text{if~}\xi=1,
\end{cases}$$
that is,  $ \Upsilon$ is defined by the sequences $\eta$ and $\theta$ of  its runs' lengths. Then

\begin{prop}
\label{ident}
The probability distribution of $\Upsilon$ is $\B$. As a consequence, $\Upsilon_{[1,n]}$, $\psi_{n}$ and $\varphi_{n}$ have the same distribution $\Bn$.
\end{prop}
\begin{proof}  We already know that, for all $n$, $\psi_{n}$ and $\varphi_{n}$ have the same distribution $\Bn$. We need to prove that $\Bn$ is also the distribution of $\Upsilon_{[1,n]}$ : for any $\ell\ge1$, and any finite word $\mathtt{w}\in \mathcal{B}^{\ell}$, for instance of the form $\mathtt{0v10}$, i.e.  having an even number of runs, say $2m$, followed by $\mathtt{0}$, the first run being thus a run of $\mathtt{0}$s, 
we can write
\begin{align*}
\mathtt{w}&=\mathtt{0}^{k_{1}}\mathtt{1}^{\ell_{1}}\mathtt{0}^{k_{2}}\mathtt{1}^{\ell_{2}}\dots\mathtt{0}^{k_{m}}\mathtt{1}^{\ell_{m}}\mathtt{0},
\\
\ell-1&=s_{m}=\sum_{i=1}^{m}(k_{i}+\ell_{i}),
\end{align*}
and we have
\begin{align*}
\mathbb{B}_{\ell}\left(\ac{\mathtt{w}}\right)
&=p_{1}^{1+k_{1}+\dots+k_{m}}(1-p_{1})^{\ell_{1}+\dots+\ell_{m}}
\\
&=p_{1}\prod_{i=1}^{m}p_{1}^{k_{i}-1}(1-p_{1})\prod_{i=1}^{m}(1-p_{1})^{\ell_{i}-1}p_{1}
\\
&=\mathbb{P}\left(\xi=0\right)\prod_{i=1}^{m}\mathbb{P}\left(\eta_{i}=k_{i},\theta_{i}=\ell_{i}\right)
\\
&=\mathbb{P}\left(\Upsilon_{[1,\ell]}=\mathtt{w}\right).
\end{align*}
For $\xi=1$, and for $\mathtt{w}$ of the form $\mathtt{1v01}$, or even when $\mathtt{w}$ does not end with the beginning of a new run, the computation is similar, in the last case using $\mathbb{P}\left(\theta_{i}> k_{i}\right)=(1-p_{1})^{k_{i}}$, for instance. This also entails that the probability distribution of $\Upsilon$ is $\B$. 
\end{proof}

\subsection{Number of runs: proof of Lemma \ref{Nn2}}

\begin{prop}
\label{bound1}
$\mathbb{P}_{n}\left(N_{n}^{(\mathtt{a}_{1})}< m\right)\le\mathbb{P}\left(S_{m}> n\right)$.
\end{prop}
\begin{proof}
If $N_{n}^{(\mathtt{0})}\circ\psi_{n}(\omega)< m$,  the $m$th run
  of $\mathtt{0}$s  of $\psi(\omega)$ begins after its $n$th letter. According to Proposition \ref{ident}, this last event has the same probability for   $\omega$ or for the infinite random word $\Upsilon$, but the  $m$th run   of $\mathtt{0}$s  of $\Upsilon$ begins either at the position  $1+S_{m-1}(< S_{m})$, or at the position  $\theta_{m}+1+S_{m-1}(\le S_{m})$, according to the value of $\xi$. \end{proof}

Thus, by Chebyshev's inequality,
\begin{align*}
\mathbb{P}_{n}\left(N_{n}^{(\mathtt{a}_{1})}< m\right)&\le\mathbb{P}\left(S_{m}> n\right)
\\
&\leq  \frac{\text{Var}(S_m)}{\pa{n-m\esp{}{\eta_k+ \theta_{k}}}^2}\,.
\end{align*}
Since $\esp{}{\eta_k+ \theta_{k}}=\sigma^{-2}$, with the choice $m=a n+b$, $b\in\R$, $a< \sigma^{2}$, we obtain
\begin{align}
\label{necklacerun} 
\Pn\pa{N_n^{(\mathtt{a}_{1})}< a n +b }
&=
\bigO{n^{-1}}.
\end{align}
For a primitive word $\mathtt{w}$,  $N_n^{(\mathtt{a}_{1})}(\mathtt{w}) \leq
N_n^{(\mathtt{a}_{1})}(\pi(\mathtt{w}))+1$,  thus
\begin{align*}
\Pn\pa{N_n^{(\mathtt{a}_{1})}\circ\pi< a n +b }\le
\Pn\pa{N_n^{(\mathtt{a}_{1})}< a n +b +1}.
\end{align*}
With that in view, Lemma~\ref{usarg2} extends \eqref{necklacerun} to $\Pnt$. Note that, with some additional work, one obtains easily, for any  $\varepsilon>0$,
$$\Pn\pa{N_n^{(\mathtt{a}_{1})}< \sigma^{2}(1-\varepsilon)n } =
\bigO{e^{-\eta n}},$$ 
for a suitable $\eta>0$. However, the weaker Lemma \ref{Nn2} suits our aims here.

\subsection{Number of long runs : proof of Lemma \ref{lemRn2}} 
Given that the probability of a run of $\texttt{a}_{1}$ longer than $(1-\varepsilon)\log_{1/p_1}{n}$ is approximately $n^{\varepsilon}/n$, an (admittedly flawed) argument suggests that in an $n$-letters long random word, there are many (that is, $\Theta\pa{n^{\varepsilon}}$) runs longer than $(1-\varepsilon)\log_{1/p_1}{n}$. For a more precise and concise argument, let $\hat{N}_{n}=N_{n}^{(\mathtt{0})}\left(\Upsilon_{[1,n]}\right)$ (resp. $\hat{H}_n$) denote the number of runs of $\mathtt{0}$'s in the word $\Upsilon_{[1,n]}$ (resp. the number  of runs with length at least $(1-\varepsilon)\log_{1/{p_1}} n$). 
Each  run of the letter $\texttt{a}_1$ in the word $\texttt{w}$ is matched with a run of $\texttt{0}$ of the same length in the word $\varphi(\texttt{w})$, thus, according to Proposition \ref{ident},  $(N_n^{(\mathtt{a}_{1})},H_n)$ (defined on $(\mathcal{A}^{n},\Pn)$) and $(\hat{N}_{n},\hat{H}_n)$ (defined on  $(\Omega,\mathcal{F},\mathbb{P})$) have the same probability distribution. 

We let, for $i \geq 1$,
\[
B_{i} = \mathbf{1}_{\{\eta_{i}\geq(1-\varepsilon)\log_{1/
p_1}{n}\}},\quad \hat{S}_{m}=\sum_{i=1}^{m}B_{i}.
\]
The sequence of lengths of  runs of $\mathtt{0}$ in $\Upsilon_{[1,n]}$ differs from $(\eta_{i})_{1 \le i\le \hat{N}_n}$, possibly, only at the last term,  due to the truncation of $\Upsilon$. As a consequence, 
\begin{equation}
\label{2q} \hat{H}_{n}\geq \hat{S}_{\hat{N}_n-1}.
\end{equation}
Note also that, under $\mathbb P$,  $(B_i)_{i \geq 1}$ is a
Bernoulli process, and that its parameter $p(n, \varepsilon)$
satisfies $n^{\varepsilon-1} \leq p(n,\varepsilon) \leq
n^{\varepsilon-1}/p_{1}$. Thus relation \eqref{2q}, with Lemma \ref{Nn2}, entails that, under $\P$, $\hat{H}_{n}$ is, roughly speaking,
stochastically larger than the binomial distribution with parameters $an+b$ and $p(n,\varepsilon)$, provided that $a<\sigma^{2}$. More precisely, \begin{align*}
\P\pa{\hat{H}_{n} < \alpha  n^\varepsilon}
&\le \P\pa{\hat{N}_{n}< an+2 }
+\P\pa{\hat{N}_{n}\ge an+2\text{~and~} \hat{S}_{\hat{N}_n-1} < \alpha  n^\varepsilon}
\\
&\le\P\pa{\hat{N}_{n}< an+2 }
+\P\pa{\hat{S}_{\lceil an\rceil} < \alpha  n^\varepsilon} .
\end{align*}
Lemma \ref{Nn2} takes care of the first term on the right hand side. For the second term,  by Okamoto's inequality \cite[Th. 2(ii)]{Oka}, a binomial random variable $S_{n,p}$ with parameters $n$ and $p<1/2$ satisfies :
\begin{equation*}
\P\pa{ S_{n,p}-pn\le -cn}
 <
\exp \pa{-nc^2/(2pq)}.
\end{equation*}
As a consequence
\begin{align*}
\P\pa{\hat{S}_{\lceil an\rceil} < \alpha  n^\varepsilon}
&\le
\P\pa{\hat{S}_{\lceil an\rceil}-\lceil an\rceil p(n,\varepsilon)< \alpha  n^\varepsilon-\lceil an\rceil p(n,\varepsilon)}
\\
&\le
\P\pa{\hat{S}_{\lceil an\rceil}-\lceil an\rceil p(n,\varepsilon)< (\alpha -a) n^\varepsilon},
\end{align*}
and, for $\alpha<a$ and $\lceil an\rceil\le 2an$, Okamoto's inequality entails that
\begin{align*}
\P\pa{\hat{S}_{\lceil an\rceil} < \alpha  n^\varepsilon}
&\le
\exp\pa{-\frac{(a-\alpha)^{2}p_{1}}{4a}  n^\varepsilon}.
\end{align*}
The first statement of  Lemma \ref{lemRn2} follows.
For the proof of the second statement, we note that if
$\mathtt{w}$ is a primitive word,
\begin{equation}
\label{picirc2} H_{n} \circ \pi(\mathtt{w}) \geq
H_{n}(\mathtt{w})-1,
\end{equation}
with equality when $\mathtt{w}$ begins and ends with long runs. Together
with Lemma \ref{usarg2}, it entails that
\begin{eqnarray*}
\Pnt\pa{H_{n} < \alpha  n^\varepsilon} &\le&
\Pn\pa{\ac{\mathtt{w}\in\mathcal P_n , H_{n}\circ \pi(\mathtt{w})<
\alpha n^\varepsilon}} +\bigO{ \beta^{n/2}}
\\
&\le& \Pn\pa{H_{n}  < \alpha  n^\varepsilon+1}+\bigO{
\beta^{n/2}}.
\end{eqnarray*}
and the Lemma follows since, as above,  $\P\pa{\hat{S}_{\lceil an\rceil} < 1+\alpha  n^\varepsilon}=\bigO{ n^{-1}}$.

\subsection{Large values of the longest runs : proof of Lemma \ref{lemMnGV2}} 
Recall that $M^{(\mathtt{0})}_n(\varphi(\mathtt{w}))$ (resp. $M^{(\mathtt{1})}_n(\varphi(\mathtt{w}))$) denote the length of the largest runs of the letter $\mathtt{a}_{1}$ (resp. non-$\mathtt{a}_{1}$ letters) of some word $\mathtt{w}\in\mathcal{A}^{n}$, see Definition \ref{defdebase}, and set 
\[
A_{1,n}=\ac{M^{(\mathtt{1})}_n\circ\varphi_{n} \ge 2\log_{1/(1-{p_1})}{n}},\quad A_{0,n}=\ac{M^{(\mathtt{0})}_n\circ\varphi_{n} \ge 2\log_{1/{p_1}}{n}}.
\]
In order to prove that $\Pn(A_{i,n})$ or $\Pnt(A_{i,n})$ are  $\bigO{n^{-1}}$, we use again  Proposition  \ref{ident} then Lemma \ref{usarg2} : for any $\mathtt{i}\in\{\mathtt{0},\mathtt{1}\}$, let $\hat{M}^{(\mathtt{i})}_n$ denote the length of the largest run of $\mathtt{i}$'s of the word $\Upsilon_{[1,n]}$, so that, by Proposition \ref{ident},  $(\hat{M}^{(\mathtt{0})}_n,\hat{M}^{(\mathtt{1})}_n)$ has the same probability distribution as $(M^{(\mathtt{0})}_n\circ\varphi_{n},M^{(\mathtt{1})}_n\circ\varphi_{n})$. As a consequence, for $y>0$, we have:
\begin{align*}
\Pn(M^{(\mathtt{0})}_n\circ\varphi_{n} \leq y ) &= \P(\hat{M}^{(\mathtt{0})}_n \leq y )
\\
&\ge \P(\forall i \in \{1, \dots,n\}, \;
\eta_i \leq y)
\\
&\ge \pa{1-p_1^{ \lfloor y \rfloor}}^n,
\end{align*}
the first inequality due to $\hat{N}_n\le n$. Choosing $y=\ceil{2\log_{1/p_1}n}-1$,
we obtain that
\begin{equation*}
\Pn\pa{A_{0,n}}
=
\bigO{n^{-1}}.
\end{equation*}

Note that for a primitive word $\mathtt{w}$, $M^{(\mathtt{0})}_n\circ \varphi_{n}\circ \pi(\mathtt{w})$ differs from $M^{(\mathtt{0})}_n\circ \varphi_{n}(\mathtt{w})$ only if the word $\mathtt{w}$ begins \emph{and} ends with the letter $\mathtt{a}_{1}$. More precisely, we have, according to Definition \ref{defdebase}, 
\begin{eqnarray*}
M^{(\mathtt{0})}_n\circ \varphi_{n}\circ \pi(\mathtt{w}) &=&
\max\{M^{(\mathtt{0})}_n\circ \varphi_{n}(\mathtt{w}),\pa{W_1(\mathtt{w})+W_{N_n}\circ \varphi_{n}(\mathtt{w})}\ind_{\mathtt{w}_{1}= \mathtt{a}_{1}= \mathtt{w}_{n}}\}
\\
&\leq&
\max\ac{M^{(\mathtt{0})}_n(\mathtt{w}),\pa{W_1(\mathtt{w})\ind_{\mathtt{w}_{1}= \mathtt{a}_{1}}+W_{N_n}(\mathtt{w})\ind_{\mathtt{w}_{n}= \mathtt{a}_{1}}}},
\end{eqnarray*}
Since $\Pn$ is invariant under words' reversal, $\pa{W_1,\mathtt{w}_{1}}$
and $\pa{W_{N_n},\mathtt{w}_{n}}$ have the same probability distribution. Thus, from Lemma
\ref{usarg2}, we deduce that
$$\Pnt\left(A_{0,n}\right)
\leq 2\,\Pn \left(W_1 \ind_{ \mathtt{w}_{1}= \mathtt{a}_{1}}\ge \log_{1/p_1}{n}\right)+\Pn\pa{A_{0,n}}+\bigO{\norm{p}_{2}^n}.
$$
which leads to the desired bound for $\Pnt\left(A_{0,n}\right)$, since, for $1\le k\le n$, $$\Pn \left(W_1 \ind_{ \mathtt{w}_{1}= \mathtt{a}_{1}}\ge k\right)=p_{1}^{k}.$$ Similar arguments hold for $\Pn\left(A_{1,n}\right)$ and $\Pnt\left(A_{1,n}\right)$.

\subsection{$\Lambda_{n}$ is large : proof of Proposition \ref{nombrecord}}
\label{secnombrecord}
Proposition \ref{nombrecord} asserts that with a probability close to 1, $\Lambda_{n}$ is at least of order $\log n$. By invariance of $\Pnb$ under uniform random permutations of the blocks $Y_{i}$, the sequence of ranks of the long blocks is, conditionally given that $H_n=k$,  a random uniform permutation of $\mathfrak{S}_k$.  Thus the conditional distribution of the number $\Lambda_{n}$ of Lyndon factors obtained this way, given that $H_n=k$, has the same law as the number of records (or of cycles) of a uniform random permutation in $\mathfrak{S}_k$  (see \cite[Ch. 1]{Arratia} or \cite[Ch. 11]{Lothaire1}), with generating function 
\[\frac1{k!}\ x(x+1)(x+2)\dots(x+k-1)=\frac1{k!}\sum_{0\le j\le k}\left[{k \atop j}\right]x^j,\]
in which $\left[{k \atop j}\right]$ is a Stirling number of the first kind.
We can thus describe the conditional law of $\Lambda_{n}$ as follows : consider a sequence $B=(B_i)_{i\ge 1}$ of independent Bernoulli random variables with respective parameters $1/i$, $B$ and $H_n$ being independent. Set
$$\tilde{S}_{n}=\sum_{1\le i\le n} B_i$$
and
$$\tilde\Lambda_{n}=\tilde{S}_{H_n} = \sum_iB_i\ 1\!\!1_{1\le i\le H_n}.$$
Then $\Lambda_{n}$ and $\tilde\Lambda_{n}$ have the same distribution, and we shall use the notation  $\Lambda_{n}$ for both of them.

The $m$-th harmonic number has the asymptotic expansion
$$\sum_{i=1}^{m}{1/i}\,=\,\mathfrak{H}_m\,=\,\ln m+\gamma+\frac{1}{2{m}}-\frac{1}{12{m}^2}+\dots,$$
in which $\gamma$ is the Euler-Mascheroni constant (see \cite{Conway}). We have
$$
\mathbb E(\tilde{S}_{n})=\mathfrak{H}_{n}
\hspace{.6cm}\text{and}\hspace{0.6cm}
\text{Var}(\tilde{S}_{n})=\mathfrak{H}_{n}-\sum_{i=1}^{n}{\frac{1}{i^2}}.
$$
By Lemma \ref{lemRn2} :
\begin{eqnarray*}
\mathbb P_n\pa{\Lambda_{n}< \varepsilon\log n/3}
&\leq&
\mathbb{P}_n\pa{\Lambda_{n}< \varepsilon\log n/3\ |\ H_n\geq\alpha n^\varepsilon}+ \bigO{ n^{-1} }.
\end{eqnarray*}
In addition
\begin{eqnarray*}
\mathbb{P}_n\pa{\Lambda_{n}< \varepsilon\log n/3\ |\ H_n\geq\alpha n^\varepsilon} &\leq& \mathbb{P}_n\pa{\tilde{S}_{\alpha n^\varepsilon}<\varepsilon\log n/3}
\\
&\leq&
\mathbb P_n\left\{\left|\tilde{S}_{\alpha n^\varepsilon}-\mathbb E(\tilde{S}_{\alpha n^\varepsilon}) \right|\geq \varepsilon\log n/2\right\}
\\
&=&
\bigO{ \frac 1{\log n} },
\end{eqnarray*}
in which the second inequality holds true provided that
\[\mathbb E(\tilde{S}_{\alpha n^\varepsilon})-\varepsilon\log n/3\geq \varepsilon\log n/2,\]
i.e. for $n$ large enough, and the last equality follows from the Bienaym\'e-Chebyshev inequality and from the asymptotic behavior of $\mathfrak{H}_{n}$.

\end{document}